\definecolor{amethyst}{rgb}{0.6, 0.4, 0.8}
\definecolor{orange}{rgb}{1,0.5,0}
\newtheorem{Theorem}{Theorem}[section]
\newtheorem{Lemma}{Lemma}[section]
\newtheorem{Proposition}{Proposition}[section]
\newtheorem{Remark}{Remark}[section]
\newtheorem{example}{Example}[section]
\newproof{pot}{Proof of Theorem \ref{thm2}}
\newcommand{\bb}{\begin{bmatrix}}
\newcommand{\eb}{\end{bmatrix}}
\newcommand{\bl}[1]{\begin{list}{#1}{\usecounter{bean}}} \newcommand{\el}{\end{list}}
\newcommand{\bel}[1]{\begin{equation} \label{#1}} \newcommand{\eel}{\end{equation}}
\def\r2n2n{\mathbb{R}^{2n\times 2n}}
\def\c2n2n{\mathbb{C}^{2n\times 2n}}
\begin{document}

\date{}
\begin{frontmatter}
\title{\color{black}Inheritance properties of the conjugate discrete-time algebraic Riccati equation}
%\tnoteref{t1}}%,t2}}
%
\author{Chun-Yueh Chiang\corref{cor1}}%\fnref{fn2}}
\ead{chiang@nfu.edu.tw}
\address{Center for General Education, National Formosa
University, Huwei 632, Taiwan.}
\author{Hung-Yuan Fan \corref{cor2}}%\fnref{fn1}}
\ead{hyfan@ntnu.edu.tw}
\address{Department of Mathematics, National Taiwan Normal University, Taipei 116325, Taiwan.}

\cortext[cor2]{Corresponding author}
%\fntext[fn1]{The first
%author was supported by the Ministry of Science and Technology of Taiwan under grant 110-2115-M-003-016.}
%\fntext[fn2]{ The second author was supported by the Ministry of Science and Technology of Taiwan under grant 109-2115-M-150-003MY2.}

\date{ }

\begin{abstract}
In this paper we consider a class of conjugate discrete-time Riccati equations,
arising originally from the linear quadratic regulation problem for discrete-time antilinear systems.
Under mild and reasonable assumptions, the existence of the maximal solution to the conjugate discrete-time Riccati equation, in which the control
weighting matrix is nonsingular and its constant term is Hermitian, will be inherited to a transformed discrete-time algebraic Riccati equation.
Based on this inheritance property, an accelerated fixed-point iteration is proposed for finding the maximal solution via the transformed Riccati equation.
%In addition, the state feedback stabilization problems have been addressed for discrete-time antilinear systems, and two stability properties of the
%associated closed-loop systemare considered as well by utilizing a state feedback controller related to the maximal solution.
Numerical examples are shown to illustrate the correctness of our theoretical results and the feasibility of the proposed algorithm.
%It is to be expected that our theoretical results presented in this paper will
%play an important role in the oprimal control problems for discrete-time antilinear systems.
\end{abstract}

\begin{keyword}
conjugate discrete-time algebraic Riccati equation, inheritance property, fixed-point iteration,
maximal solution, LQR control problem,
%state feedback stabilization problem,
antilinear system
\MSC 39B12\sep39B42\sep93A99\sep65H05\sep15A24
 \end{keyword}

\end{frontmatter}

\section{Introduction} \label{sec1}
In this paper we are mainly concerned with the inheritance properties of the maximal solution to the conjugate
discrete-time algebraic Riccati equation (CDARE) of the form
\begin{subequations}\label{cdare}
\begin{align}
  X = \mathcal{R}(X):= A^H \overline{X}A - A^H \overline{X}B(R+B^H \overline{X}B)^{-1} B^H \overline{X}A + H, \label{cdare-a}
     % &= A^H X(I + GX)^{-1}A + H, \label{dare-b}
\end{align}
{\color{black} or the compact form}
 \begin{align}
 X = A^H \overline{X} (I+ G\overline{X})^{-1} A + H, \label{cdare-b}
\end{align}
\end{subequations}
where $A\in \mathbb{C}^{n\times n}$, $B\in \mathbb{C}^{n\times m}$, $R\in \mathbb{H}_m$ is nonsingular,
$H\in \mathbb{H}_n$, $I$ is the identity matrix of compatible size and
$G:= BR^{-1}B^H$, respectively. Here, $\mathbb{H}_\ell$ denotes the set of all $\ell \times \ell$ Hermitian matrices and
$\overline{M}$ is the matrix obtained by taking the complex conjugate of each entry of a matrix $M$. The solution $X\in \mathbb{H}_n$ of the CDARE \eqref{cdare-a},
with $R_X:=R+B^H \overline{X}B$ being positive definite, is considered in this paper.

For any $M,N\in \mathbb{H}_n$, the positive definite and positive semidefinite matrices are denoted by $M > 0$ and $M\geq 0$, respectively.
Moreover, we usually write $M \geq N$ (or $M \leq N$) if $M - N \geq 0$ (or $N - M \geq 0$) in the context.
{ For the sake of simplicity, the spectrum and spectral radius of $A\in \mathbb{C}^{n\times n}$ are
denoted by $\sigma (A)$ and $\rho (A)$, respectively. We say that the CDARE \eqref{cdare} has a maximal solution $X_M\in \mathbb{H}_n$
if $X_M\geq X$ {for any solution $X\in \mathbb{H}_n$ of the CDARE.} Thus, it follows from the definition of the maximality that $X_M$ is unique if it exists.} A matrix operator $f:\mathbb{H}_{n}\rightarrow\mathbb{H}_{n}$ is order preserving (resp. reversing) on $\mathbb{H}_{n}$ if $f(A)\geq f(B)$ (resp.  $f(A)\leq f(B)$) when $A\geq B$ and $A,B\in\mathbb{H}_{n}$.

%if $X =\mathcal{R}(X)$ and $X\in \mathbb{P}$,
%where $\mathcal{R}(\cdot)$ is the Riccati operator defined by \eqref{cdare-a}.

A class of CDAREs \eqref{cdare} arises originally from linear quadratic regulation (LQR) optimal control problem for the discrete-time antilinear system
%of the state space representation
\begin{equation} \label{antils}
  x_{k+1} = A\overline{x_k} + B\overline{u_k},\quad k\geq 0,
\end{equation}
where $x_k\in \mathbb{C}^n$ is the state response and $u_k\in \mathbb{C}^m$ is the control input.
 The main goal of this control problem is to find a state feedback control $\color{black} u_k = -\overline{F} x_k$ such that the performance index
 \[ \mathcal{J}(u_k, x_0) := \sum_{k=0}^\infty (x_k^H H x_k + u_k^H R u_k)  \]
is minimized with $H\geq 0$ and $R > 0$. If the antilinear system \eqref{antils} is controllable,
it is shown in Theorem 12.7 of \cite{w.z17} that the optimal state feedback controller is {\color{black}$u_k^* = -\overline{F_{X_*}} x_k$
with $F_{X_*}  := R_{X_*}^{-1}B^H \overline{X_*}A$} such that
the minimum value of $\mathcal{J}(u_k^*, x_0) = x_0^H X_* x_0$ is achieved and the corresponding closed-loop antilinear system
\begin{equation} \label{clm}
  x_{k+1} = (A - BF_{X_*}) \overline{x_k},\quad k\geq 0,
\end{equation}
is asymptotically stable, i.e., $\lim\limits_{k\rightarrow \infty} x_k = 0$,
where $X_* \geq 0$ is the unique solution of the CDARE \eqref{cdare-a} or the {\em discrete-time algebraic anti-Riccati equation} \cite{w.q.l.s16,w.z17}.

 Recently, some accelerated iterations have been proposed for solving the unique positive definite solution  of the CDARE \eqref{cdare} under positive definiteness assumptions with $G > 0$ and $H > 0$, see, e.g., \cite{l.c18,l.c20,m19} and the references therein. In addition, this numerical technique has also been utilized to some real-life applications recently,
see, e.g., \cite{l.w.g20,r.l.a20,r.l.e21}.

It is worth mentioning that the authors in \cite{l.c18} first transformed a conjugate nonlinear matrix equation, arising from modern quantum theory,
to some CDARE (1b) with $G, H > 0$, and proposed theoretical results for the existence and uniqueness of the (maximal) positive definite solution to the CDARE.
On the other hand, we have generalized the existence of the maximal solution for the CDARE \eqref{cdare} with $G, H\in \mathbb{H}_n$
under the framework of the fixed-point iteration (FPI) and some weaker assumptions \cite{f.c23}, see also Theorem~\ref{thm:Xmax} below.

Inspired by the technique using in \cite{l.c18}, if
\begin{align}\label{a1}
\det(R_H)\neq 0
\end{align}
, where $R_H := R + B^H \overline{H} B$, then the CDARE~\eqref{cdare-a}
can be transformed into a discrete-time algebraic Riccati equation (DARE) of the form
\begin{subequations} \label{dare2}
\begin{align}
X = \widehat{\mathcal{R}}(X) &:= \mathcal{R}(\mathcal{R}(X)) = \widehat{A}^H {X}\widehat{A} - \widehat{A}^H {X}\widehat{B}(\widehat{R}+
\widehat{B}^H {X}\widehat{B})^{-1} \widehat{B}^H \widehat{A} + \widehat{H} \label{dare2-a} \\
&= \widehat{A}^H X (I + \widehat{G} X)^{-1}\widehat{A} + \widehat{H}, \label{dare2-b}
\end{align}
\end{subequations}
where its coefficient matrices are given by
\begin{subequations}\label{coeff-dare2}
  \begin{align}
\widehat{A}&= \overline{A}A-\overline{A}B(R+B^H \overline{H} B)^{-1}B^H\overline{H}A,
\widehat{B} =\bb \overline{B} & \overline{A}B \eb, \label{hatA} \widehat{R} = \overline{R}\oplus R_H, \\
\widehat{G}&= \widehat{B} \widehat{R}^{-1} \widehat{B}^H = \overline{G}+\overline{A} (I+G\overline{H})^{-1}{G}\overline{A}^H,
\widehat{H} = H+{A}^H \overline{H}(I+G\overline{H})^{-1}{A}, \label{hatG}
\end{align}
\end{subequations}
and the invertibility of the matrix $\widehat{R}_X := \widehat{R} + \widehat{B}^H X\widehat{B}$ will be shown in the next section.
The assumption~\eqref{a1} will be needed throughout the paper.

 Therefore, the first question is to ask whether the CDARE \eqref{cdare} and its induced DARE \eqref{dare2}
both have the same maximal solution $X_M\in \mathbb{H}_n$ under the assumptions proposed in \cite{f.c23}. If yes, we may further ask if there exists
a suitable matrix $\widehat{F}_{X_M}\in \mathbb{C}^{2m\times n}$ related to $X_M$ so that the closed-loop matrix $\widehat{A} - \widehat{B}\widehat{F}_{X_M}$
inherits the stability of the closed-loop matrix $A-BF_{X_M}$ related to the
original CDARE \eqref{cdare}, where $F_{X_M}$ is defined by \eqref{clm}.
These interesting questions will be addressed completely in this work.

Once the CDARE \eqref{cdare} and its transformed DARE \eqref{dare2} have the same
maximal solution, there are dozens of numerical methods for solving the standard DAREs of small to medium sizes
 in the existing literatures, see, e.g., \cite{c.f.l.w04,g.k.l92,k88,l.r95,l79,p.l.s80,v81} and  the references therein.
Nevertheless, following the line of the FPI discussed in \cite{f.c23}, we are mainly interested in the design of
an accelerated fixed-point iteration (AFPI) for finding the maximal solution of
the CDARE \eqref{cdare} via its transformed DARE \eqref{dare2} directly.

The paper is organized as follows. In Section~\ref{sec2}, we introduce some useful notations, and auxiliary theorems and lemmas
that will be used in our main results. Some inheritance properties related to the CDARE \eqref{cdare} and its transformed
DARE \eqref{dare2} have been addressed in Section~\ref{sec:inheri}.
Especially, under reasonable and mild assumptions, we have shown that these Riccati matrix equations both have the
same maximal Hermitian solution in Theorem~\ref{thm:inheri-1}. Based on this theorem, an accelerated
fixed-point iteration is proposed for computing the maximal solution of the transformed DARE \eqref{dare2} directly in Section~\ref{sec:afpi}.
%Furthermore, in Section~\ref{sec:sfsp}, we have also discussed the state feedback stabilization problems for discrete-time antilinear systems, and a special case of the maximal solution to CDAREs is presented in Proposition~\ref{pop:lyap} for the Lyapunov stability of the corresponding closed-loop system.
Numerical examples are given to illustrate the validity of the inheritance properties and the feasibility of our AFPI algorithm
in Section~\ref{sec:NumExamp}. Finally, we conclude the paper in Section~\ref{sec:cr}.

\section{Preliminaries}\label{sec2}

In this section we introduce some notations and auxiliary lemmas that will be used below. Firstly, let the conjugate Stein matrix operator $\mathcal{C}_A:\mathbb{H}_n\rightarrow\mathbb{H}_n$  and standard Stein matrix operator $\mathcal{S}_A:\mathbb{H}_n\rightarrow\mathbb{H}_n$
associated with a matrix $A\in\mathbb{C}^{n\times n}$ be defined by
\begin{equation}\label{SME}
\mathcal{C}_A(X):= X-A^H \overline{X} A,\quad \mathcal{S}_A(X):= X-A^H X A
\end{equation}
for any $X\in\mathbb{H}_n$. In general, the operator $\mathcal{C}_A$ is neither order preserving nor order reversing. However, under the assumption that $\rho(\overline{A}A)<1$, its inverse operator $\mathcal{C}^{-1}_A$ is order preserving,
since $\mathcal{C}_A^{-1}(X)=\sum\limits_{k=0}^\infty ((\overline{A}A)^k)^H (X+A^H \overline{X} A) (\overline{A}A)^k \geq
\sum\limits_{k=0}^\infty ((\overline{A}A)^k)^H (Y+A^H \overline{Y} A) (\overline{A}A)^k=\mathcal{C}_A^{-1}(Y)$
for $X,Y\in \mathbb{H}_n$ with $X\geq Y$.

{\color{black}
It will be clear later on that the matix operators $\mathcal{R}:\mathrm{dom}(\mathcal{R})\rightarrow \mathbb{H}_n$ and $\widehat{\mathcal{R}}:\mathrm{dom}(\widehat{\mathcal{R}})\rightarrow \mathbb{H}_n$ defined by \eqref{cdare-a} and \eqref{dare2-a}, respectively,
both play an important role in our theorems below, where $\mathrm{dom}(\mathcal{R}):=\{X\in\mathbb{H}_n\, |\, \det(R_X)\neq 0\}$ and
$\mathrm{dom}(\widehat{\mathcal{R}}):=\{X\in\mathbb{H}_n\, |\, \det(\widehat{R}_X)\neq 0\}$. If $R_X$ is nonsingular for $X\in \mathrm{dom}(\mathcal{R})$,
it is easily seen that the matrix $\widehat{R}_X$ defined by \eqref{hatA} is of the form
\begin{align}\label{hRx}
 \widehat{R}_X&=\bb  R_X &\overline{B}^H X\overline{A}B\\ B^H \overline{A}^H X\overline{B} & R+B^H(\overline{H}+\overline{A}^H X\overline{A}) B\eb.
\end{align}
Therefore, $\widehat{R}_X$ is nonsingular, i.e., $X\in \mathrm{dom}(\widehat{\mathcal{R}})$, since the Schur complement $R_X$ of $\widehat{R}_X$ given by
\begin{align*}
\widehat{R}_X/R_X&=R+B^H(\overline{H}+\overline{A}^H X\overline{A}) B-B^H(\overline{A}^H X\overline{B}R_X^{-1}\overline{B}^H X\overline{A}) B\\
&=R+B^H\overline{X}B=R_X
\end{align*}
is also nonsingular. This implies $\mathrm{dom}(\mathcal{R}) \subseteq \mathrm{dom}(\widehat{\mathcal{R}})$ and thus
the solution set $\mathcal{R}_= := \{X\in \mathrm{dom}(\mathcal{R})\, |\, X = \mathcal{R}(X) \}$ of the CDARE \eqref{cdare} is
contained in the solution set $\widehat{\mathcal{R}}_= := \{X\in \mathrm{dom}(\widehat{\mathcal{R}})\, |\, X = \widehat{\mathcal{R}}(X) \}$ of
the transformed DARE \eqref{dare2}.

Moreover, for the sake of simplicity the following sets and matrices
\begin{subequations}
\begin{align}
 &\mathcal{R}_\leq := \{X\in \mathrm{dom}(\mathcal{R})\, |\, X  \leq  \mathcal{R}(X) \},
 \mathcal{R}_\geq := \{X\in \mathrm{dom}(\mathcal{R})\, |\, X \geq \mathcal{R}(X) \},  \label{Rleq} \\
 &\widehat{\mathcal{R}}_\leq := \{X\in \mathrm{dom}(\widehat{\mathcal{R}})\, |\, X  \leq  \widehat{\mathcal{R}}(X) \},
 \widehat{\mathcal{R}}_\geq := \{X\in \mathrm{dom}(\widehat{\mathcal{R}})\, |\, X \geq \widehat{\mathcal{R}}(X) \}, \label{hRleq} \\
&F_X := R_X^{-1}B^H \overline{X}A, T_X := A-BF_X = (I + G \overline{X})^{-1}A, \widehat{T}_X  := \overline{T_X} T_X,  \label{Fx}  \\
&\widehat{F}_X := \widehat{R}_X^{-1}\widehat{B}^H X\widehat{A}, \widehat{T}_X^D := \widehat{A} - \widehat{B} \widehat{F}_X
= (I + \widehat{G} X)^{-1}\widehat{A}, D_{\widehat{F}} := \widehat{A} - \widehat{B} \widehat{F}   \label{hFx}
\end{align}
\end{subequations}
will be used throughout the paper for any $X\in \mathrm{dom}(\mathcal{R})$ and $\widehat{F}\in \mathbb{C}^{2m\times n}$.
}

The following lemma characterizes {some useful identities} depending on the matrix operators $\mathcal{R}(\cdot)$ and $\widehat{\mathcal{R}}(\cdot)$ with respect to the associated conjugate Stein and Stein operators,
which will be quoted in the proof of Lemma~\ref{thm3p3} later on.
\begin{Lemma}\cite{c.f21,f.c23} \label{lem2p1} \color{black}
%For any $X \in\mathrm{dom}(\mathcal{R})$ and $F\in \mathbb{C}^{m\times n}$, the matrix identity
%  \begin{align}\label{Req-a}
%  X-\mathcal{R}(X)&=\mathcal{C}_{A_F}(X)-H_{F}+ K_F (X)
%  \end{align}
%  holds, where $A_F := A-BF$, $K_F (X) :=(F-F_X)^H R_X(F-F_X)$ and $H_{F} :=H+K_F (0)= H+F^H R F$.
%  Let $X,Y\in\mbox{dom}(\mathcal{R})$. The following identities hold, which state the relationship between CDARE and a conjugate Stein matrix equation:
%}
\begin{subequations}\label{Req}
\begin{enumerate}
  \item[(i)] If $A_{F}:=
  A-BF$ for any $F\in\mathbb{C}^{m\times n}$ and $H_F:=H+F^H R F$, then
  \begin{align}
  X-\mathcal{R}(X)&=\mathcal{C}_{A_{F}}(X)-H_F+K_F(X), \label{Req-a}
  \end{align}
  where $K_F(X) := (F-F_{X})^H(R+B^H \overline{X} B)(F-F_{X})$.
  \item[(ii)] If $K(Y,X) := K_{F_{Y}}(X)$ and
  $H_{F_Y} :=H+K(Y,0)= H+F_{Y}^H R F_{Y}$,
  then \eqref{Req-a} can be rewritten as
  \begin{align}
  X-\mathcal{R}(X)&=\mathcal{C}_{T_{Y}}(X)-H_{F_Y}+K(Y,X). \label{Req-b}
  \end{align}
\item[(iii)] If $D_{\widehat{F}}:=
  \widehat{A}-\widehat{B}\widehat{F}$ for any $\widehat{F}\in\mathbb{C}^{2m\times n}$ and $\widehat{H}_F:=\widehat{H}+\widehat{F}^H \widehat{R} \widehat{F}$, then
  \begin{align}
  X-\widehat{\mathcal{R}}(X)&=
  \mathcal{S}_{D_{\widehat{F}}}(X)
  -\widehat{H}_{\widehat{F}}+\widehat{K}_{\widehat{F}}(X), \label{hReq-a}
  \end{align}
  where $\widehat{K}_{\widehat{F}}(X) := (\widehat{F}-\widehat{F}_{X})^H(\widehat{R}+\widehat{B}^H {X} \widehat{B})(\widehat{F}-\widehat{F}_{X})$ and $\widehat{F}_{X}:=\widehat{R}_X^{-1}\widehat{B}X\widehat{A}$.
\end{enumerate}
\end{subequations}
\end{Lemma}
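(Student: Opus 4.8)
The plan is to treat all three identities as instances of a single Riccati \emph{completion-of-squares} computation, proving the conjugate identity \eqref{Req-a} in full and then obtaining \eqref{Req-b} and \eqref{hReq-a} as a specialization and a non-conjugate mirror, respectively. Throughout I will use that $X\in\mathbb{H}_n$ forces $\overline{X}\in\mathbb{H}_n$, so that $R_X=R+B^H\overline{X}B$ is Hermitian and the defining relation $R_X F_X=B^H\overline{X}A$ holds together with its adjoint $F_X^H R_X=A^H\overline{X}B$.

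For (i), I would start from the right-hand side and expand
\begin{align*}
\mathcal{C}_{A_F}(X)-H_F+K_F(X)={}&X-(A-BF)^H\overline{X}(A-BF)\\
&-(H+F^H R F)+(F-F_X)^H R_X(F-F_X).
\end{align*}
Multiplying out $(A-BF)^H\overline{X}(A-BF)+F^HRF$ collects into $A^H\overline{X}A-A^H\overline{X}BF-F^HB^H\overline{X}A+F^H R_X F$, while expanding $(F-F_X)^H R_X(F-F_X)$ and inserting $R_X F_X=B^H\overline{X}A$, $F_X^H R_X=A^H\overline{X}B$, and $F_X^H R_X F_X=A^H\overline{X}B R_X^{-1}B^H\overline{X}A$ produces exactly the four terms needed to cancel the $F$-dependent contributions. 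What survives is $A^H\overline{X}A-A^H\overline{X}BR_X^{-1}B^H\overline{X}A$, so that after restoring $-H$ the whole right-hand side equals $X-\mathcal{R}(X)$ by \eqref{cdare-a}. The key point is that every term containing the free matrix $F$ telescopes away, which is precisely why the identity holds for \emph{arbitrary} $F\in\mathbb{C}^{m\times n}$.

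Identity \eqref{Req-b} then follows immediately by setting $F=F_Y$ in \eqref{Req-a}: by the definitions in \eqref{Fx} one has $A_{F_Y}=A-BF_Y=T_Y$, hence $\mathcal{C}_{A_{F_Y}}=\mathcal{C}_{T_Y}$, while $H_{F_Y}=H+F_Y^H R F_Y$ and $K_{F_Y}(X)=K(Y,X)$; the consistency $H+K(Y,0)=H+F_Y^H R F_Y$ uses $F_0=R^{-1}B^H\overline{0}A=0$. For (iii) I would repeat the computation of (i) verbatim under the substitution $A\mapsto\widehat{A}$, $B\mapsto\widehat{B}$, $R\mapsto\widehat{R}$, $H\mapsto\widehat{H}$, $F\mapsto\widehat{F}$, with the crucial difference that the standard Stein operator $\mathcal{S}_{D_{\widehat{F}}}(X)=X-D_{\widehat{F}}^H X D_{\widehat{F}}$ carries no complex conjugate; consequently one works with $X$ rather than $\overline{X}$ and uses $\widehat{R}_X\widehat{F}_X=\widehat{B}^H X\widehat{A}$ in place of its conjugate counterpart, the remaining telescoping being identical.

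The computations are entirely routine, so the only real care needed is bookkeeping: keeping the two cross terms as genuine Hermitian adjoints of one another under the conjugation in $\mathcal{C}_A$, and not confusing $\overline{X}$ (which appears in $\mathcal{R}$ and hence in (i)--(ii)) with $X$ (which appears in $\widehat{\mathcal{R}}$ and hence in (iii)). Since $\widehat{R}_X$ is nonsingular on $\mathrm{dom}(\widehat{\mathcal{R}})$---as established by the Schur-complement argument preceding the lemma---$\widehat{F}_X$ and $\widehat{K}_{\widehat{F}}(X)$ are well defined, so no difficulty arises from the inverses.
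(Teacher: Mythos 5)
Your proof is correct; the paper itself quotes this lemma from \cite{c.f21,f.c23} without proof, and your completion-of-squares computation is exactly the standard argument behind it. The cancellation of every $F$-dependent term via $R_XF_X=B^H\overline{X}A$ (and its adjoint) is the right mechanism, and the specialization $F=F_Y$ for (ii) and the conjugate-free mirror using $\widehat{R}_X\widehat{F}_X=\widehat{B}^HX\widehat{A}$ for (iii) are both handled correctly.
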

%
%\begin{proof}
%The proof is based on a direct calculation and the details are left to the reader.
%%\begin{enumerate}
%%  \item[(i)]
%%  Observe that
%% \[
%% X-\mathcal{R}(X)=\Gamma_1(X)+\Gamma_2(X),
%% \]
%%where $\Gamma_1(X):=\mathcal{C}_A(X)-H$ and $\Gamma_2(X):=A^H \overline{X} B(R+B^H \overline{X} B)^{-1}B^H \overline{X} A$. A direct computation yields
%%\begin{align*}
%%&\Gamma_2(X)=F_X^H(R+B^H \overline{X} B)F_X=K_F(X)+[F^H(R+B^H \overline{X} B)F_X\\
%%&+F_X^H(R+B^H \overline{X} B)F]-F^H(R+B^H \overline{X} B)F\\
%%&=K_F(X)-F^H R F+[F^HB^H \overline{X} A+A^H \overline{X} BF-F^HB^H \overline{X} BF]\\
%%&=K_F(X)-F^H R F+\mathcal{C}_{A_F}(X)-\mathcal{C}_{A}(X).
%%\end{align*}
%%We conclude that
%%\begin{align*}
%% X-\mathcal{R}(X)&=[\mathcal{C}_{A}(X)-H] + [K_F(X)-F^H R F+\mathcal{C}_{A_F}(X)-\mathcal{C}_{A}(X)] \\
%% &=\mathcal{C}_{A_{F}}(X)-H_F+K_F(X).
%%\end{align*}
%%%
%%\item[(ii)] The result is clearly true from \eqref{Req-a} with $F=F_{Y}$ and $A_{F_{Y}}\equiv T_{Y}$.
%%\end{enumerate}
%\end{proof}
Next, a sufficient condition is provided in the following result to guarantee the equivalence between two subsets of $\mathbb{H}_n$, which will be described in the proof of Lemma~\ref{thm3p3} later.
\begin{Lemma}\label{thm2p29}
Let $X\in\mathbb{H}_n$.
Suppose that there exists  a matrix $F\in\mathbb{C}^{m\times n}$ such that $\rho(\widehat{A}_F)<1$, where $\widehat{A}_F:=\overline{A}_FA_F$ and $A_F:=A-BF$.
Assume that there exists a  $n$-square matrix $Y\geq 0$ satisfies
\begin{align}\label{3}
\mathcal{C}_{T_X}(Y)\geq K_F(X).
\end{align}
Then,
$X\in\mathbb{T}$ if $X\in\mathbb{P}$.
\end{Lemma}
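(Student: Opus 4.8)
The plan is to transfer the order relation defining membership in the CDARE-side set $\mathbb{P}$ to the one defining membership in the transformed DARE-side set $\mathbb{T}$ by rewriting both residuals $X-\mathcal{R}(X)$ and $X-\widehat{\mathcal{R}}(X)$ through the completing-the-square identities of Lemma~\ref{lem2p1} and then linking the conjugate Stein operator to the standard Stein operator. First I would unpack $X\in\mathbb{P}$ and apply \eqref{Req-a} with the prescribed $F$ to write $X-\mathcal{R}(X)=\mathcal{C}_{A_F}(X)-H_F+K_F(X)$, so that $Q:=\mathcal{C}_{A_F}(X)=(X-\mathcal{R}(X))+H_F-K_F(X)$. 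The hypothesis $\rho(\widehat{A}_F)<1$ is exactly the condition recorded after \eqref{SME} that makes $\mathcal{C}_{A_F}^{-1}$ order preserving; together with the inequality \eqref{3}, namely $\mathcal{C}_{T_X}(Y)\geq K_F(X)$ for some $Y\geq 0$, it will let me control the sign-indefinite remainder $-K_F(X)$ that obstructs monotonicity of $\mathcal{R}$.

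The crucial structural step is the bridge from the conjugate (antilinear) Stein relation to an honest Stein relation governing \eqref{dare2-a}. Conjugating $\mathcal{C}_{A_F}(X)=Q$ entrywise gives $\overline{X}=\overline{A}_F^{\,H}X\overline{A}_F+\overline{Q}$, and substituting this back into $Q=X-A_F^H\overline{X}A_F$ yields $\mathcal{S}_{\widehat{A}_F}(X)=Q+A_F^H\overline{Q}A_F$, where $\widehat{A}_F=\overline{A}_FA_F$. I would then choose $\widehat{F}\in\mathbb{C}^{2m\times n}$ with $D_{\widehat{F}}=\widehat{A}-\widehat{B}\widehat{F}=\widehat{A}_F$ (solvable because $\widehat{A}-\widehat{A}_F$ lies in the column space of $\widehat{B}=[\,\overline{B}\ \ \overline{A}B\,]$, as one reads off from \eqref{hatA}) and apply \eqref{hReq-a} to get $X-\widehat{\mathcal{R}}(X)=\mathcal{S}_{\widehat{A}_F}(X)-\widehat{H}_{\widehat{F}}+\widehat{K}_{\widehat{F}}(X)$. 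Combining the two displays expresses $X-\widehat{\mathcal{R}}(X)$ as $(X-\mathcal{R}(X))+A_F^H\overline{(X-\mathcal{R}(X))}A_F$ plus a remainder built from $H_F$, $K_F(X)$, $\widehat{H}_{\widehat{F}}$ and $\widehat{K}_{\widehat{F}}(X)$; the first two terms carry the sign prescribed by the inequality defining $\mathbb{P}$, and the remainder is where $Y$ and \eqref{3} do their work, dominating $K_F(X)$ and forcing the whole expression to have the sign required for $X\in\mathbb{T}$. Throughout I would use \eqref{hRx} and its Schur-complement computation to keep $\widehat{R}_X$ invertible, so that $\widehat{\mathcal{R}}(X)$ and $\widehat{F}_X$ stay well defined along the argument.

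The hard part will be the bookkeeping that aligns the conjugate-side data with the transformed-side data: verifying that the feedback $\widehat{F}$ realizing $D_{\widehat{F}}=\widehat{A}_F$ produces $\widehat{H}_{\widehat{F}}$ and $\widehat{K}_{\widehat{F}}(X)$ which combine with $H_F$ and $A_F^H\overline{H_F}A_F$ so that the sign-indefinite part of the remainder collapses exactly to a quantity dominated by $\mathcal{C}_{T_X}(Y)$. This is precisely the point where the mismatch between the fixed feedback $A_F$ and the optimal feedback $T_X=(I+G\overline{X})^{-1}A$ of \eqref{Fx} must be reconciled, and I expect to need the identity \eqref{Req-b} to re-express the residual in terms of $\mathcal{C}_{T_X}$ before comparing with \eqref{3}; the order-preservation of $\mathcal{C}_{A_F}^{-1}$ (equivalently $\mathcal{S}_{\widehat{A}_F}^{-1}$, legitimate since $\rho(\widehat{A}_F)<1$) is then what converts the dominated remainder into the final inequality placing $X$ in $\mathbb{T}$.
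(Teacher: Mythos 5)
There is a genuine gap, and it begins with what the two sets in the conclusion actually are. Membership in $\mathbb{P}$ means only that $R_X=R+B^H\overline{X}B>0$; it is not an inequality on the residual $X-\mathcal{R}(X)$, so there is no sign ``prescribed by the inequality defining $\mathbb{P}$'' to propagate. More seriously, membership in $\mathbb{T}$ means $\rho(\widehat{T}_X)=\rho(\overline{T_X}T_X)<1$, a spectral-radius condition on the closed-loop matrix, not a sign condition on $X-\widehat{\mathcal{R}}(X)$. Your plan --- rewrite both residuals via Lemma~\ref{lem2p1}, bridge $\mathcal{C}_{A_F}$ to $\mathcal{S}_{\widehat{A}_F}$, and conclude by order-preservation of $\mathcal{S}_{\widehat{A}_F}^{-1}$ --- therefore never engages the statement to be proved: no amount of sign bookkeeping on $X-\widehat{\mathcal{R}}(X)$ yields a bound on $\rho(\widehat{T}_X)$. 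Note also that the hypothesis $\rho(\widehat{A}_F)<1$ concerns the \emph{fixed} feedback $F$, whereas the conclusion concerns the $X$-dependent feedback $F_X$; the entire content of the lemma is the passage from the former to the latter, and that passage is absent from your outline.

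The missing idea is a Lyapunov-type eigenvector argument in which $Y\geq 0$ is the Lyapunov function. From \eqref{3} one obtains $\mathcal{S}_{\widehat{T}_X}(Y)=\mathcal{C}_{T_X}(Y)+T_X^H\overline{\mathcal{C}_{T_X}(Y)}\,T_X\geq K_F(X)+T_X^H\overline{K_F(X)}\,T_X$. If $\widehat{T}_Xx=\lambda x$ with $|\lambda|\geq 1$ and $x\neq 0$, then $x^H\mathcal{S}_{\widehat{T}_X}(Y)x=(1-|\lambda|^2)\,x^HYx\leq 0$, which forces $x^HK_F(X)x=0$ and $x^HT_X^H\overline{K_F(X)}\,T_Xx=0$. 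This is where $X\in\mathbb{P}$ is genuinely used: $R_X>0$ makes $K_F(X)=(F-F_X)^HR_X(F-F_X)\geq 0$ with $\mathrm{Ker}(K_F(X))=\mathrm{Ker}(F-F_X)\subseteq\mathrm{Ker}(A_F-T_X)$, so $T_Xx=A_Fx$ and likewise $\overline{T_X}T_Xx=\overline{A_F}T_Xx$. Hence $\widehat{A}_Fx=\overline{A_F}A_Fx=\lambda x$, contradicting $\rho(\widehat{A}_F)<1$. In your write-up $K_F(X)$ is treated as a ``sign-indefinite remainder'' to be dominated, but under $X\in\mathbb{P}$ it is positive semidefinite, and it is precisely its kernel --- not its sign in a residual identity --- that transfers stability from $\widehat{A}_F$ to $\widehat{T}_X$.
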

\begin{proof}
The inequality \eqref{3} implies
\begin{align}\label{4}
\mathcal{S}_{\widehat{T}_X}(Y)\geq K_F(X)+T_X^H \overline{K_F(X)} T_X.
\end{align}
Assume that there exists a scalar $\lambda$ with $|\lambda|\geq 1$ such that $\widehat{T}_X x=\lambda x$ with a nonzero vector $x\in\mathbb{C}^n$. Then,
\begin{align}\label{59}
0\geq(1-|\lambda|^2)(x^H Y x)=x^H \mathcal{S}_{\widehat{T}_X}(Y) x \geq x^H K_F(X) x+x^H T_X^H \overline{K_F(X)} T_X x \geq 0.
\end{align}
Let $W\in\mathbb{H}_n$ and $V\in\mathbb{P}$. Note that
\begin{subequations}
\begin{align}\label{6}
 \mathrm{Ker}(K_F(V))&\subseteq\mathrm{Ker}(A_F-T_V), \\
  \mathrm{Ker}(\overline{K_F(V)} T_W)&\subseteq\mathrm{Ker}(\overline{A_F}T_W-\overline{T_V}T_W).
\end{align}
\end{subequations}
 According to \eqref{59}  we have
\begin{align*}%\label{}
  T_X x &= A_Fx,  \\
  \overline{T_X}T_X x&= \overline{A_F}T_X x.
\end{align*}
Thus,
\begin{align}\label{7}
\widehat{A}_{F}x=\overline{{A}_{F}}{A}_{F}x
=\overline{{A}_{F}}T_X x=\overline{T_X}T_X x=\widehat{T}_{X}x=\lambda x.
\end{align}
In other words, $\lambda\in\sigma(\widehat{A}_{F})$, and a contradiction is obtained from $\sigma(\widehat{A}_{F})\subseteq\mathbb{D}$.
\end{proof}
%\begin{Remark}
%  \begin{enumerate}
%    \item  Let $\mathbb{T}\neq\phi$ and $F:=F_{X_\mathbb{T}}$ for some $X_\mathbb{T}\in\mathbb{T}$. By applying Lemma~\ref{thm2p29},
% $X\in\mathbb{T}$ if $X\in\mathbb{P}$ and the following inequality
%\begin{align}\label{333}
%\mathcal{C}_{T_X}(Y)\geq K(X_\mathbb{T},X)
%\end{align}
%holds for some $Y\geq 0$ and $X_\mathbb{T}\in\mathbb{T}$.
%    \item From \eqref{6} and \eqref{7} it is straightforward to verify that
%    \begin{align*}
%   &\mathrm{Ker}( K_F(X)+T_X^H \overline{K_F(X)} T_X)=\mathrm{Ker}( K_F(X))\cap\mathrm{Ker}( T_X^H \overline{K_F(X)} T_X)\\
%    &\subseteq\mathrm{Ker}( E_{\widehat{A}_{F}}).
%    \end{align*}
%  \end{enumerate}
%\end{Remark}
Let $\mathbb{T}:=\{X\in\mathrm{dom}(\mathcal{R}) \,|\, \rho(\widehat{T}_X)<1\}$ and $\mathbb{P}:=\{X\in \mathrm{dom}(\mathcal{R}) \,|\, R_X>0\}$.
Starting from an initial matrix $X_0 \in \mathcal{S}_{\geq} :=\bigcup\limits_{W \in\mathbb{T}} \{ X\in \mathbb{H}_n\, |\, \mathcal{C}_{T_W} (X)\geq H_W\}$ if $\mathbb{T}\neq\emptyset$.
%with $H_W : = H + F_W^H R F_W$ for $W \in \mathrm{dom}(\mathcal{R})$,
It has been shown in \cite{f.c23} that the sequence $\{X_k\}_{k=0}^\infty$ generated by the following FPI of the form
\begin{equation} \label{fpi}
X_{k+1} = \mathcal{R}(X_k), \quad k\geq 0,
\end{equation}
converges at least linearly to the maximal solution of the CDARE \eqref{cdare} if
\begin{equation}\label{ma}
 \mathbb{T}\neq\emptyset\quad \mbox{and}\quad \mathcal{R}_{\leq}\cap\mathbb{P}\neq\emptyset,
\end{equation}
which are main assumptions throughout this paper. The following theorem is quoted from Theorem~3.1 of \cite{f.c23} and it will be applied to deduce the
inheritance properties presented in Section~\ref{sec:inheri}.
\begin{Theorem} \label{thm:Xmax}
{\color{black} If the assumptions in \eqref{ma} are fulfilled, the maximal solution $X_M$ of the CDARE \eqref{cdare} exists.}
Furthermore, the following statements hold:
\begin{enumerate}
  \item[(i)]
The sequence $\{X_k\}_{k=0}^\infty$ generated by the FPI \eqref{fpi} with $X_0\in\mathcal{S}_\geq$ is well-defined. Moreover,
   $X_k\in\mathcal{S}_\geq \subseteq \mathcal{R}_\geq \cap \mathbb{P} \cap \mathbb{T}$ for all $k\geq 0$.
  \item[(ii)] $X_k\geq X_{k+1}\geq X_\mathbb{P}$ for all $k\geq 0$ and $X_\mathbb{P}\in \mathcal{R}_{\leq} \cap \mathbb{P}$.
  \item[(iii)] The sequence $\{X_k\}_{k=0}^\infty$  converges at least linearly to $X_M$, which is the maximal element of the set $\mathcal{R}_\leq\cap \mathbb{P}$
and satisfies $\rho (\widehat{T}_{X_M}) \leq 1$, with the rate of convergence
  \[  \limsup_{k\rightarrow \infty} \sqrt[k]{\|X_k - X_{M}\|}\leq \rho (\widehat{T}_{X_M}) \]
 whenever $X_M\in\mathbb{T}$.
\end{enumerate}
\end{Theorem}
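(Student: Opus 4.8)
The plan is to establish the theorem by the monotone-convergence scheme for Riccati fixed-point iterations, carrying the conjugation throughout via the conjugate Stein operator and its order-preserving inverse. Three ingredients drive the argument: the algebraic identity of Lemma~\ref{lem2p1}(ii), which rewrites the residual as $X-\mathcal{R}(X)=\mathcal{C}_{T_Y}(X)-H_{F_Y}+K(Y,X)$ with $K(Y,X)\geq0$ whenever $R_X>0$; the fact recorded in Section~\ref{sec2} that $\mathcal{C}_{T_W}^{-1}$ is order preserving once $\rho(\widehat{T}_W)<1$, i.e. $W\in\mathbb{T}$; and Lemma~\ref{thm2p29}, which certifies membership in $\mathbb{T}$.

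For (i), I would fix $X_0\in\mathcal{S}_\geq$ with a witness $W\in\mathbb{T}$, so that $\mathcal{C}_{T_W}(X_0)\geq H_W$ (with $H_W=H_{F_W}$). The heart of (i) is the inclusion $\mathcal{S}_\geq\subseteq\mathcal{R}_\geq\cap\mathbb{P}\cap\mathbb{T}$ together with invariance $\mathcal{R}(\mathcal{S}_\geq)\subseteq\mathcal{S}_\geq$. Lemma~\ref{lem2p1}(ii) with $Y=W$ gives $X_0-\mathcal{R}(X_0)=\mathcal{C}_{T_W}(X_0)-H_{F_W}+K(W,X_0)\geq K(W,X_0)$, so after proving $R_{X_0}>0$ (whence $K(W,X_0)\geq0$) one obtains $X_0\in\mathcal{R}_\geq$; membership $X_0\in\mathbb{T}$ then follows from Lemma~\ref{thm2p29} applied with $F=F_W$, for which $\widehat{A}_{F_W}=\widehat{T}_W$ is stable by $W\in\mathbb{T}$ and a certificate $Y\geq0$ with $\mathcal{C}_{T_{X_0}}(Y)\geq K_{F_W}(X_0)$ must be produced. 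Finally I would verify that $X_1=\mathcal{R}(X_0)$ again satisfies the defining Stein inequality of $\mathcal{S}_\geq$ for a suitable witness, which proves both well-definedness and $X_k\in\mathcal{S}_\geq$ for every $k$.

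For (ii) and (iii) I would combine monotone decrease with a lower bound. Decrease is immediate from $X_k\in\mathcal{R}_\geq$: $X_{k+1}=\mathcal{R}(X_k)\leq X_k$. For the bound, take any $X_\mathbb{P}\in\mathcal{R}_\leq\cap\mathbb{P}$ (nonempty by \eqref{ma}) and subtract the two instances of Lemma~\ref{lem2p1}(ii) with $Y=X_k$, one for $X_k$ and one for $X_\mathbb{P}$; the resulting $\mathcal{C}_{T_{X_k}}(X_\mathbb{P}-X_k)\leq-K(X_k,X_\mathbb{P})\leq0$, after applying the order-preserving $\mathcal{C}_{T_{X_k}}^{-1}$ (legitimate since $X_k\in\mathbb{T}$), yields $X_\mathbb{P}\leq X_k$ for every $k$. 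Being nonincreasing and bounded below, $\{X_k\}$ converges to some $X_M\in\mathbb{H}_n$; continuity of $\mathcal{R}$ on $\mathrm{dom}(\mathcal{R})$ shows $X_M$ solves the CDARE, the lower-bound step shows it dominates every element of $\mathcal{R}_\leq\cap\mathbb{P}$, so $X_M$ is their maximal element (hence the maximal solution), with $\rho(\widehat{T}_{X_M})\leq1$ by continuity of the spectral radius. The rate follows by subtracting $X_M=\mathcal{R}(X_M)$ from $X_{k+1}=\mathcal{R}(X_k)$ and linearizing: the error obeys a Stein recursion governed over two steps by $Z\mapsto\widehat{T}_{X_M}^H Z\widehat{T}_{X_M}$, giving $\limsup_k\sqrt[k]{\|X_k-X_M\|}\leq\rho(\widehat{T}_{X_M})$ when $X_M\in\mathbb{T}$.

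I expect the main obstacle to be the invariance-and-positivity step inside (i): proving that the iteration cannot leave $\mathcal{S}_\geq\subseteq\mathbb{P}\cap\mathbb{T}$. This is precisely where conjugation blocks any naive use of order-preservation of $\mathcal{R}$, so the Stein reformulation must be propagated from one iterate to the next, and Lemma~\ref{thm2p29} must be invoked with a correctly chosen semidefinite certificate $Y$ at each step. The lower-bound comparison is a secondary difficulty, since $\mathcal{R}$ is neither order preserving nor order reversing and the comparison has to be routed entirely through $\mathcal{C}_{T_{X_k}}^{-1}$.
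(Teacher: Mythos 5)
The paper does not actually prove this theorem---it is quoted verbatim from Theorem~3.1 of \cite{f.c23}---but your outline is the standard monotone fixed-point argument that the paper's preliminaries are built to support (the residual identities of Lemma~\ref{lem2p1}, the order-preserving inverse conjugate Stein operator, and the stability certificate of Lemma~\ref{thm2p29}), and it is essentially the intended proof. The two steps you leave as ``to be produced''---the positivity $R_{X_k}>0$ and the semidefinite certificate $Y$ for Lemma~\ref{thm2p29}---are both discharged by one and the same comparison: first derive $X_k\geq X_{\mathbb{P}}$ by applying the order-preserving $\mathcal{C}_{T_W}^{-1}$ to the Stein inequalities (which needs no positivity of $K$), then take $Y=X_k-X_{\mathbb{P}}\geq 0$ and verify $\mathcal{C}_{T_{X_k}}(X_k-X_{\mathbb{P}})\geq K_{F_W}(X_k)$ exactly as in the displayed chain \eqref{3a}--\eqref{3b} of the paper's proof of Lemma~\ref{thm3p3}.
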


For any $Y_0 \in \mathrm{dom}(\mathcal{R})$, consider the sequence $\{Y_k\}_{k=0}^\infty$ generated by the FPI
\begin{equation} \label{fpi-hat}
  Y_{k+1} = \widehat{\mathcal{R}}(Y_k) := \mathcal{R}(\mathcal{R}(Y_k)),\quad k\geq 0,
\end{equation}
where the matrix operators $\mathcal{R}(\cdot)$ and $\widehat{\mathcal{R}}(\cdot)$ are defined by \eqref{cdare-a} and \eqref{dare2-a}, respectively.
Analogously, the existence of the maximal solution $Y_M$ to the transformed DARE \eqref{dare2} can be established by modifying the proof
Theorem~3.1 in \cite{f.c23} slightly. It is stated without proof in the following theorem, in which
$\widehat{\mathbb{T}} := \{X \in \mathbb{H}_n\,|\, \rho(\widehat{T}_X^D)<1\}$, $\widehat{\mathbb{P}} :=
\{Y\in \mathrm{dom}(\widehat{\mathcal{R}}) \,|\, \widehat{R}_Y > 0\}$ and
$\widehat{\mathcal{S}}_{\geq} :=\bigcup\limits_{X_{\widehat{\mathbb{T}}}\in\widehat{\mathbb{T}}} \{ X\in \mathbb{H}_n\, |\, \mathcal{S}_{X_{\widehat{\mathbb{T}}}} (X)\geq \widehat{H}_{X_\mathbb{T}}=\widehat{H}
 +\widehat{F}_{X_\mathbb{T}}^H \widehat{R} \widehat{F}_{X_\mathbb{T}}\}$ if $\widehat{\mathbb{T}}\neq\emptyset$.

\begin{Theorem} \label{thm:Xmax-dare}
Assume that $\widehat{\mathbb{T}}\neq \emptyset$ and $\widehat{\mathcal{R}}_{\leq}\cap\widehat{\mathbb{P}}\neq\emptyset$. Let the sequence $\{Y_k\}_{k=0}^\infty$ be generated by the FPI \eqref{fpi-hat} with $Y_0\in\widehat{\mathcal{S}}_\geq$. Then the following statements hold:
\begin{enumerate}
  \item[(i)] ${Y_k}$ is well-defined and $Y_k\in\widehat{\mathcal{S}}_\geq\subset\widehat{\mathcal{R}}_\geq\cap\widehat{\mathbb{P}}\cap\widehat{\mathbb{T}}$ for all $k\geq 0$.
  \item[(ii)] $Y_k\geq Y_{k+1}\geq X_{\widehat{\mathbb{P}}}$ for all $X_{\widehat{\mathbb{P}}}\in\widehat{\mathbb{P}}$ and for all $k\geq 0$.
  \item[(iii)] The sequence $\{Y_k\}_{k=0}^\infty$  converges at least linearly to $Y_M$, which is the maximal element of $\widehat{\mathcal{R}}_\leq\cap
      \widehat{\mathbb{P}}$ satisfying $\rho (\widehat{T}^D_{Y_M}) \leq 1$, with the rate of convergence
  \[  \limsup_{k\rightarrow \infty} \sqrt[k]{\|Y_k - Y_{M}\|}\leq \rho (\widehat{T}^D_{Y_M}),
  \]
 provided that $Y_M\in\widehat{\mathbb{T}}$.
\end{enumerate}
\end{Theorem}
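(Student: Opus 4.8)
The plan is to transcribe the argument of Theorem~\ref{thm:Xmax} (Theorem~3.1 of \cite{f.c23}) almost verbatim, after replacing every conjugate object attached to the CDARE \eqref{cdare-a} by its standard counterpart attached to the transformed DARE \eqref{dare2-a}. Explicitly, one replaces $\mathcal{R}$ by $\widehat{\mathcal{R}}$, the conjugate Stein operator $\mathcal{C}_{T_W}$ by the standard Stein operator $\mathcal{S}_{\widehat{T}_W^D}$, the feedback $F_W$ by $\widehat{F}_W$, the residual $H_W$ by $\widehat{H}_{\widehat{F}_W}$, and the sets $\mathbb{T},\mathbb{P},\mathcal{S}_\geq,\mathcal{R}_\leq,\mathcal{R}_\geq$ by $\widehat{\mathbb{T}},\widehat{\mathbb{P}},\widehat{\mathcal{S}}_\geq,\widehat{\mathcal{R}}_\leq,\widehat{\mathcal{R}}_\geq$, respectively. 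The point that makes this transcription legitimate is that \eqref{dare2-a} is a genuine, non-conjugate DARE, so the algebraic decomposition of Lemma~\ref{lem2p1}(iii) plays exactly the role that the conjugate identity of Lemma~\ref{lem2p1}(ii) plays in the original proof.

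Two auxiliary facts drive everything. First, whenever $\rho(D)<1$ the inverse Stein operator is order preserving, since $\mathcal{S}_D^{-1}(X)=\sum_{k=0}^\infty (D^H)^k X D^k$ and each summand $(D^H)^k(X-Y)D^k$ is positive semidefinite whenever $X\geq Y$; this is the exact analogue of the computation carried out for $\mathcal{C}_A^{-1}$ in Section~\ref{sec2}, and is in fact cleaner because no complex conjugation intervenes. Second, one needs the standard-Stein analogue of Lemma~\ref{thm2p29}: if a feedback $\widehat{F}$ satisfies $\rho(D_{\widehat{F}})<1$ and there is a $Y\geq 0$ with $\mathcal{S}_{\widehat{T}_X^D}(Y)\geq\widehat{K}_{\widehat{F}}(X)$, then $\rho(\widehat{T}_X^D)<1$ once $X\in\widehat{\mathbb{P}}$. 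Its proof is the verbatim eigenvector argument of Lemma~\ref{thm2p29}, with $A_F,T_X,\widehat{A}_F,\widehat{T}_X$ replaced by $D_{\widehat{F}}$ and $\widehat{T}_X^D$ throughout, so that the squaring step of the conjugate case disappears.

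With these in hand, statement~(i) is obtained as follows. Fix $Y_0\in\widehat{\mathcal{S}}_\geq$ with witness $W\in\widehat{\mathbb{T}}$, so that $\mathcal{S}_{\widehat{T}_W^D}(Y_0)\geq\widehat{H}_{\widehat{F}_W}$. Evaluating Lemma~\ref{lem2p1}(iii) at $\widehat{F}=\widehat{F}_W$, for which $D_{\widehat{F}_W}=\widehat{T}_W^D$, gives
\[
Y_0-\widehat{\mathcal{R}}(Y_0)=\mathcal{S}_{\widehat{T}_W^D}(Y_0)-\widehat{H}_{\widehat{F}_W}+\widehat{K}_{\widehat{F}_W}(Y_0)\geq \widehat{K}_{\widehat{F}_W}(Y_0)\geq 0,
\]
where the last inequality uses $\widehat{R}_{Y_0}>0$; the block factorization \eqref{hRx} shows that $\widehat{R}_{Y_0}>0$ is equivalent to $R_{Y_0}>0$, so $\widehat{\mathbb{P}}$-membership reduces to the positivity of $R_{Y_0}$, established exactly as in \cite{f.c23}, and then $\rho(\widehat{T}_{Y_0}^D)<1$ follows from the analogue of Lemma~\ref{thm2p29}. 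This proves $\widehat{\mathcal{S}}_\geq\subseteq\widehat{\mathcal{R}}_\geq\cap\widehat{\mathbb{P}}\cap\widehat{\mathbb{T}}$, and stability of the iteration in $\widehat{\mathcal{S}}_\geq$ follows by taking $W=Y_k$ as the witness for $Y_{k+1}=\widehat{\mathcal{R}}(Y_k)$ and invoking the order-preserving property of $\mathcal{S}_{\widehat{T}_{Y_k}^D}^{-1}$. For~(ii), the inequality $Y_k\geq\widehat{\mathcal{R}}(Y_k)=Y_{k+1}$ shows the sequence is decreasing, and any $X\in\widehat{\mathcal{R}}_\leq\cap\widehat{\mathbb{P}}$ is a lower bound by an induction that compares $Y_{k+1}-X$ with $(\widehat{T}_X^D)^H(Y_k-X)\widehat{T}_X^D$ through Lemma~\ref{lem2p1}(iii) and the monotone $\mathcal{S}^{-1}$. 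A decreasing sequence in $\mathbb{H}_n$ bounded below converges; its limit $Y_M$ is a fixed point of $\widehat{\mathcal{R}}$, hence the maximal element of $\widehat{\mathcal{R}}_\leq\cap\widehat{\mathbb{P}}$ with $\rho(\widehat{T}_{Y_M}^D)\leq 1$ by continuity of the spectrum, and the linear rate in~(iii) is read off from the linearization $Y_{k+1}-Y_M=(\widehat{T}_{Y_M}^D)^H(Y_k-Y_M)\widehat{T}_{Y_M}^D+o(\|Y_k-Y_M\|)$, whose contraction is governed by $\rho(\widehat{T}_{Y_M}^D)$.

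The main obstacle is not the monotone-convergence skeleton, which is routine, but the bookkeeping that keeps every iterate inside $\widehat{\mathbb{P}}\cap\widehat{\mathbb{T}}$: one must simultaneously control the positive definiteness of $\widehat{R}_{Y_k}$ and the closed-loop spectral radius $\rho(\widehat{T}_{Y_k}^D)<1$ along a sequence that is only known to be decreasing. This is precisely where the standard-Stein analogue of Lemma~\ref{thm2p29} is indispensable, and where care is needed to ensure that the substitution of $\mathcal{S}$ for $\mathcal{C}$ does not quietly invoke a property special to the conjugate Stein operator.
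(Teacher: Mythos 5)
Your overall strategy is exactly what the paper intends: Theorem~\ref{thm:Xmax-dare} is stated in the paper \emph{without proof}, with the remark that it follows by modifying the proof of Theorem~3.1 in \cite{f.c23} slightly, i.e., by the systematic replacement of the conjugate objects ($\mathcal{C}$, $\mathbb{T}$, $\mathbb{P}$, $\mathcal{S}_\geq$, $\mathcal{R}_{\lessgtr}$) by their standard hatted counterparts, which is precisely your transcription. The two auxiliary facts you isolate (order preservation of $\mathcal{S}_D^{-1}$ when $\rho(D)<1$, and the standard-Stein analogue of Lemma~\ref{thm2p29}, whose eigenvector argument indeed loses the squaring step of the conjugate case) are the right ones, and the monotone-convergence skeleton for (ii) and (iii) is correct.

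One step in your proof of (i) is wrong as written, though easily repaired. You claim that the block form \eqref{hRx} shows that $\widehat{R}_{Y_0}>0$ is \emph{equivalent} to $R_{Y_0}>0$. It is not: the Schur-complement computation behind Proposition~\ref{lem:inheri}(ii) gives $\widehat{R}_X>0\Rightarrow R_X>0$ unconditionally, but the converse direction is established there only for $X\in\mathcal{R}_=$, which a generic $Y_0\in\widehat{\mathcal{S}}_\geq$ need not satisfy; moreover, ``established exactly as in \cite{f.c23}'' does not apply, since the positivity argument there lives in the unhatted world and rests on $\mathcal{R}_\leq\cap\mathbb{P}\neq\emptyset$, which is not among the hypotheses of this theorem. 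The correct (and simpler) route stays entirely in the hatted world: take $X_{\widehat{\mathbb{P}}}\in\widehat{\mathcal{R}}_\leq\cap\widehat{\mathbb{P}}$, which is nonempty by hypothesis; Lemma~\ref{lem2p1}(iii) with $\widehat{F}=\widehat{F}_W$ gives $\mathcal{S}_{\widehat{T}_W^D}(X_{\widehat{\mathbb{P}}})\leq\widehat{H}_{\widehat{F}_W}\leq\mathcal{S}_{\widehat{T}_W^D}(Y_0)$, so order preservation of $\mathcal{S}_{\widehat{T}_W^D}^{-1}$ yields $Y_0\geq X_{\widehat{\mathbb{P}}}$ and hence $\widehat{R}_{Y_0}\geq\widehat{R}_{X_{\widehat{\mathbb{P}}}}>0$ directly. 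This is the same comparison you already deploy for the lower bound in (ii), just needed earlier; with that substitution, and with $\widehat{K}_{\widehat{F}_W}(Y_0)\geq 0$ now justified, your argument goes through.
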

It is clear that $X_M\leq Y_M$, and the interest is finding some reasonable conditions such that $X_M$ coincides with $Y_M$.
Finally, the following lemma can be used to prove the inheritance of the maximal solution to CDAREs,
which will be illustrated in the next section.
\begin{Lemma}\label{thm3p3}
Assume that $\mathcal{R}_{\leq}\cap\mathbb{P}\neq\phi$. Let $\mathbb{F}:=\{F\in\mathbb{C}^{m\times n}|\rho(\widehat{A}_F)<1\}$ and  $\widehat{\mathbb{F}}:=\{\widehat{F}\in\mathbb{C}^{2m\times n}|\rho(\widehat{A}-\widehat{B}\widehat{F})<1\}$.
The following two statements are equivalent:
\begin{enumerate}
  \item[(i)] $\mathbb{F}\neq\phi$.
  %There exists a matrix $F \in \mathbb{C}^{m\times n}$ satisfying $\rho (\widehat{A}_F) < 1$.
  %
  \item[(ii)] $\mathbb{T}\cap\mathbb{P}\neq\phi$. %Moreover,
        %There exists a matrix ${X}_\star \in \mathbb{P}$ satisfying $\rho (\widehat{T}^C_{X_\star}) < 1$.
   %%
  %\item  There exists a matrix ${Y}_\star \in \mbox{dom}(\widehat{\mathcal{R}})$ satisfying $\rho ((I+\widehat{G}Y_\star)^{-1}\widehat{A}) < 1$.
%  %
%  \item The pair $(\widehat{A},\widehat{B})$ is stabilizable.
%%
%\item The CDARE \eqref{cdare} has a unique almost stabilizing solution $X\in \mathbb{H}_n$.
%\item The CDARE \eqref{cdare} has a maximal and almost stabilizing solution $X\in \mathbb{H}_n$.
\end{enumerate}
Moreover, if either (i) or (ii) holds, then $\widehat{\mathcal{R}}_{=}\cap\widehat{\mathbb{P}}\neq\phi$ and
the following two statements are also holded:
\begin{enumerate}
\item[(iii)]$\widehat{\mathbb{F}}\neq\phi$ or the pair $(\widehat{A},\widehat{B})$ is stabilizable.
 \item[(iv)]
 $\widehat{\mathbb{T}}\cap\widehat{\mathbb{P}}\neq\phi$.

 %There exists a matrix ${Y}_\star \in \mbox{dom}(\widehat{\mathcal{R}})$ satisfying $\rho (\widehat{T}^D_{Y_\star}) < 1$.
\end{enumerate}
\end{Lemma}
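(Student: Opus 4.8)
The plan is to settle the equivalence (i) $\Leftrightarrow$ (ii) first, using the certificate hypothesis of Lemma~\ref{thm2p29} as the organizing tool, and then to deduce the three ``moreover'' assertions from it. The direction (ii) $\Rightarrow$ (i) is immediate: given $X\in\mathbb{T}\cap\mathbb{P}$, take $F:=F_X=R_X^{-1}B^H\overline{X}A$, so that $A_F=A-BF_X=T_X$ and hence $\widehat{A}_F=\overline{A_F}A_F=\overline{T_X}T_X=\widehat{T}_X$; since $X\in\mathbb{T}$ we get $\rho(\widehat{A}_F)=\rho(\widehat{T}_X)<1$, i.e. $F\in\mathbb{F}$. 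The converse (i) $\Rightarrow$ (ii) is where the work lies: I would fix $F\in\mathbb{F}$ and a point $X_0\in\mathcal{R}_\leq\cap\mathbb{P}$ (available by hypothesis) and try to certify $X_0\in\mathbb{T}$ through Lemma~\ref{thm2p29}. Since $\rho(\widehat{A}_F)<1$ and $X_0\in\mathbb{P}$, it suffices to exhibit $Y\geq 0$ with $\mathcal{C}_{T_{X_0}}(Y)\geq K_F(X_0)$.

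For the construction of $Y$ I would exploit that $\rho(\widehat{A}_F)<1$ makes $\mathcal{C}_{A_F}$ invertible with order-preserving inverse, as recorded in Section~\ref{sec2}. Combining this with the sub-solution identity $X_0-\mathcal{R}(X_0)=\mathcal{C}_{A_F}(X_0)-H_F+K_F(X_0)\leq 0$ from Lemma~\ref{lem2p1}(i) leads naturally to the policy value $Z_F:=\mathcal{C}_{A_F}^{-1}(H_F)$ and the candidate $Y:=Z_F-X_0\geq 0$, the nonnegativity following from $\mathcal{C}_{A_F}(Y)=(\mathcal{R}(X_0)-X_0)+K_F(X_0)\geq 0$ and order preservation of $\mathcal{C}_{A_F}^{-1}$.

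The hard part will be verifying the inequality $\mathcal{C}_{T_{X_0}}(Y)\geq K_F(X_0)$ for this (or a corrected) $Y$: the operator that is invertible is $\mathcal{C}_{A_F}$, whereas the certificate must control $\mathcal{C}_{T_{X_0}}$, and the two differ by $A_F^H\overline{Y}A_F-T_{X_0}^H\overline{Y}T_{X_0}$, a sign-indefinite cross term because $T_{X_0}$ optimizes the Riccati map at the weight $\overline{X_0}$ rather than at $\overline{Y}$. I would attack this mismatch with the completion-of-squares identities of Lemma~\ref{lem2p1}(i)--(ii), writing $A_F=T_{X_0}+B(F_{X_0}-F)$ and re-expressing the residual through the nonnegative quantities $\mathcal{R}(X_0)-X_0$ and $K_F(X_0)$ supplied by $X_0\in\mathcal{R}_\leq\cap\mathbb{P}$. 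This is the step I expect to be the main obstacle, and it is precisely what upgrades the marginal bound $\rho(\widehat{T}_{X_0})\leq 1$ from Theorem~\ref{thm:Xmax} to the strict $\rho(\widehat{T}_{X_0})<1$.

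Once (ii) holds, the remaining claims follow more cleanly. With $\mathbb{T}\neq\emptyset$ and $\mathcal{R}_\leq\cap\mathbb{P}\neq\emptyset$, Theorem~\ref{thm:Xmax} yields the maximal solution $X_M\in\mathcal{R}_=\cap\mathbb{P}$; the solution-set inclusion $\mathcal{R}_=\subseteq\widehat{\mathcal{R}}_=$ and the Schur-complement computation $\widehat{R}_X/R_X=R_X$ of Section~\ref{sec2} give $X_M\in\widehat{\mathcal{R}}_=\cap\widehat{\mathbb{P}}$, so $\widehat{\mathcal{R}}_=\cap\widehat{\mathbb{P}}\neq\emptyset$. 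For (iii) I would exhibit a stabilizing $\widehat{F}$ explicitly: setting $F_H:=R_H^{-1}B^H\overline{H}A$ one has $\widehat{A}=\overline{A}(A-BF_H)$, and choosing $\widehat{F}:=\begin{bmatrix}\overline{F}A_F\\ F-F_H\end{bmatrix}$ a direct cancellation (using $\widehat{B}=[\,\overline{B}\ \ \overline{A}B\,]$) collapses $D_{\widehat{F}}=\widehat{A}-\widehat{B}\widehat{F}$ to $\overline{A_F}A_F=\widehat{A}_F$, whence $\rho(D_{\widehat{F}})=\rho(\widehat{A}_F)<1$; thus $\widehat{\mathbb{F}}\neq\emptyset$ and $(\widehat{A},\widehat{B})$ is stabilizable. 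Finally, for (iv) I would take $X=X_M$: it lies in $\widehat{\mathbb{P}}$ by the same Schur-complement argument, and since $X_M$ solves the CDARE the closed-loop matrix of the transformed DARE factors as $\widehat{T}_{X_M}^D=T_{X_M}\overline{T_{X_M}}$, giving $\rho(\widehat{T}_{X_M}^D)=\rho(\overline{T_{X_M}}T_{X_M})=\rho(\widehat{T}_{X_M})$; the strict bound $X_M\in\mathbb{T}$ delivered by the certificate argument above then yields $X_M\in\widehat{\mathbb{T}}\cap\widehat{\mathbb{P}}$. Both (iii) and (iv) therefore rest on the same certificate construction, which is the crux of the lemma.
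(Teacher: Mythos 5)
There is a genuine gap, and it stems from aiming the certificate at the wrong point. In your (i)~$\Rightarrow$~(ii) step you try to show that the \emph{given} subsolution $X_0\in\mathcal{R}_\leq\cap\mathbb{P}$ lies in $\mathbb{T}$, and you candidly leave the key inequality $\mathcal{C}_{T_{X_0}}(Y)\geq K_F(X_0)$ unverified. That inequality cannot be established in general because the statement you are driving at is false: the maximal solution $X_M$ always lies in $\mathcal{R}_\leq\cap\mathbb{P}$, yet Theorem~\ref{thm:Xmax} only guarantees $\rho(\widehat{T}_{X_M})\leq 1$, and Example~\ref{ex2} exhibits a case with $\rho(\widehat{T}_{X_M})=1$ while $\mathbb{F}\neq\emptyset$. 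So no amount of completion-of-squares manipulation will certify an arbitrary element of $\mathcal{R}_\leq\cap\mathbb{P}$ as a member of $\mathbb{T}$. The same defect recurs in your treatment of (iv), where you assert that ``the strict bound $X_M\in\mathbb{T}$'' follows from the certificate argument; it does not.

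The repair — and the paper's actual route — is to \emph{manufacture a new point} rather than certify the given one. Fix $F\in\mathbb{F}$ and $Z\geq H_F$, and set $X_\star:=\mathcal{C}_{A_F}^{-1}(Z)$, which exists and dominates every $X_{\mathbb{P}}\in\mathcal{R}_\leq\cap\mathbb{P}$ because $\mathcal{C}_{A_F}(X_{\mathbb{P}})\leq H_F\leq Z$ and $\mathcal{C}_{A_F}^{-1}$ is order preserving; hence $R_{X_\star}\geq R_{X_{\mathbb{P}}}>0$ and $X_\star\in\mathbb{P}$. The strict supersolution property $X_\star-\mathcal{R}(X_\star)\geq K_F(X_\star)$ (from $\mathcal{C}_{A_F}(X_\star)\geq H_F$) combined with the two instances of \eqref{Req-b} at the policy $F_{X_\star}$ gives
\begin{align*}
\mathcal{C}_{T_{X_\star}}(X_\star-X_{\mathbb{P}})\;\geq\; K(X_\star,X_{\mathbb{P}})-\bigl(X_{\mathbb{P}}-\mathcal{R}(X_{\mathbb{P}})\bigr)+K_F(X_\star)\;\geq\; K_F(X_\star),
\end{align*}
so Lemma~\ref{thm2p29} applies at $X_\star$ with the nonnegative certificate $Y:=X_\star-X_{\mathbb{P}}$ and yields $\rho(\widehat{T}_{X_\star})<1$, i.e.\ $X_\star\in\mathbb{T}\cap\mathbb{P}$. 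Your (ii)~$\Rightarrow$~(i) direction and your explicit choice $\widehat{F}=\bigl[\begin{smallmatrix}\overline{F}A_F\\ F-F_H\end{smallmatrix}\bigr]$ with $D_{\widehat{F}}=\widehat{A}_F$ for (iii) agree with the paper, but (iv) must likewise be proved by repeating the supersolution construction in the hatted (standard Stein) setting, i.e.\ with $X_\star=\mathcal{S}_{D_{\widehat{F}}}^{-1}(Z)$ for $Z\geq\widehat{H}_{\widehat{F}}$, not by appealing to $X_M$.
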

\begin{proof}
The proof of first two equivalence of these statements is given as follows.
\begin{enumerate}
  \item[(i)$\Rightarrow$ (ii):]

Let $X_\star = \mathcal{C}_{A_F}^{-1} (Z)$ by taking an arbitrary matrix $Z$ with $Z\geq H_F$  and $X_{\mathbb{P}}\in\mathcal{R}_{\leq}\cap\mathbb{P}$.
Recall that~\eqref{Req-a} we have
 \begin{align*}
 X_{\mathbb{P}}-\mathcal{R}(X_{\mathbb{P}})&=\mathcal{C}_{A_{F}}(X_{\mathbb{P}})-H_F+K_F(X_{\mathbb{P}})\leq 0.
 \end{align*}
Therefore, $\mathcal{C}_{A_{F}}(X_{\mathbb{P}})\leq H_F\leq Z\leq\mathcal{C}_{A_{F}}(X_{\star})$. Namely, $X_{\star}\geq X_{\mathbb{P}}$ and $R+B^H X_{\star} B \geq R+B^H X_{\mathbb{P}} B>0$. Thus, $X_{\star}\in\mathbb{P}\subseteq\mbox{dom}({\mathcal{R}})$.
On the other hand, we also have
\begin{align*}
X_\star-\mathcal{R}(X_\star)&=\mathcal{C}_{A_{F}}(X_\star)-H_F+K_F(X_\star)\geq K_F(X_\star).
 \end{align*}
Applying \eqref{Req-b} we obtain
\begin{align*}
&X_\star-\mathcal{R}(X_\star) =\mathcal{C}_{T_{X_\star}}(X_\star)-H_{F_{X_\star}},\\
X_{\mathbb{P}}-\mathcal{R}(X_{\mathbb{P}}) &=\mathcal{C}_{T_{X_\star}}(X_{\mathbb{P}})-H_{F_{X_\star}}+K(X_\star,X_{\mathbb{P}}).
 \end{align*}
From which we deduce that
\begin{subequations}
\begin{align}
  &\mathcal{C}_{T_{X_\star}}(X_\star-X_{\mathbb{P}})
=X_\star-\mathcal{R}(X_\star)+H_{F_{X_\star}}
  -(X_{\mathbb{P}}-\mathcal{R}(X_{\mathbb{P}}) +H_{F_{X_\star}}\label{3a}\\
  &-K(X_\star,X_{\mathbb{P}}))\geq K(X_\star,X_{\mathbb{P}})-(X_{\mathbb{P}}-\mathcal{R}(X_{\mathbb{P}}))+K_F(X_\star) \geq K_F(X_\star).\label{3b} %\label{3c}
\end{align}
\end{subequations}
%We further have
%\begin{align*} %\label{stein-T0}
%  \mathcal{S}_{T_{X_\star}} (X_\star) &=H_{F_{X_\star}}+ K_F(X_\star) \geq K_F(X_\star)\geq 0,
%\end{align*}
It follows from Lemma~\ref{thm2p29} with the assumption $\rho (\widehat{A}_F) < 1$ that $\rho (\widehat{T}_{X_\star}) < 1$. That is, $X_\star\in\mathbb{T}\cap\mathbb{P}$.
  \item[(ii)$\Rightarrow$(i):] Suppose that the statement (ii) holds. If we let $F:=F_{X_\star}$, then
  $\rho (\widehat{A}_F) = \rho (\widehat{T}_{X_\star}) < 1$.
  \end{enumerate}
  Suppose that $\mathcal{R}_{\leq}\cap\mathbb{P}\neq\phi$ and (ii) holds. From Theorem~\ref{thm:Xmax} the maximal solution $X_M$ exists and thus
  $\widehat{\mathcal{R}}_{\leq}\cap\widehat{\mathbb{P}}\neq\phi$.
  The proof of the remaining part of the lemma are listed below.
   \begin{enumerate}
   \item[(i)$\Rightarrow$(iii):]Let $\widehat{F}:=\bb \overline{F}A_F \\ F-F_H\eb\in\mathbb{C}^{2m\times n}$, then $\widehat{A}-\widehat{B}\widehat{F} = \widehat{A}_F $ and this complete the proof of (iii).
   \item[(iii)$\Rightarrow$(iv):]
   Let $X_\star = \mathcal{S}_{D_{\widehat{F}}}^{-1} (Z)$ by taking an arbitrary matrix $Z$ with $Z\geq \widehat{H}_{\widehat{F}}$  and $X_{\widehat{\mathbb{P}}}\in\widehat{\mathcal{R}}_{\leq}\cap\widehat{\mathbb{P}}$.
Recall that~\eqref{hReq-a} we have
 \begin{align*}
 X_{\widehat{\mathbb{P}}}-\widehat{\mathcal{R}}(X_{\widehat{\mathbb{P}}})&=\mathcal{S}_{D_{\widehat{F}}}(X_{\widehat{\mathbb{P}}})
 -\widehat{H}_{\widehat{F}}+\widehat{K}_{\widehat{F}}(X_{\widehat{\mathbb{P}}})\leq 0.
 \end{align*}
Therefore, $\mathcal{S}_{D_{\widehat{F}}}(X_{\widehat{\mathbb{P}}})\leq \widehat{H}_{\widehat{F}}\leq\mathcal{S}_{D_{\widehat{F}}}(X_{\star})$. Namely, $X_{\star}\geq X_{\widehat{\mathbb{P}}}$ and $\widehat{R}+\widehat{B}^H X_{\star} \widehat{B} \geq \widehat{R}+\widehat{B}^H X_{\widehat{\mathbb{P}}} \widehat{B}>0$. Thus, $X_{\star}\in\widehat{\mathbb{P}}\subseteq\mbox{dom}({{\widehat{R}}})$.
On the other hand, we also have
\begin{align*}
X_\star-\widehat{\mathcal{R}}(X_\star)&=\mathcal{S}_{D_{\widehat{F}}}
(X_\star)-\widehat{H}_{\widehat{F}}+\widehat{K}_{\widehat{F}}(X_\star)\geq \widehat{K}_{\widehat{F}}(X_\star).
 \end{align*}
Applying \eqref{Req-b} we obtain
\begin{align*}
&X_\star-\widehat{\mathcal{R}}(X_\star) =\mathcal{S}_{T^D_{X_\star}}(X_\star)-\widehat{H}_{\widehat{F}_{X_\star}},\\
X_{\widehat{\mathbb{P}}}-\widehat{\mathcal{R}}(X_{\widehat{\mathbb{P}}}) &=\mathcal{S}_{T^D_{X_\star}}(X_{\widehat{\mathbb{P}}})-\widehat{H}_{\widehat{F}_{X_\star}}+\widehat{K}(X_\star,X_{\widehat{\mathbb{P}}}).
 \end{align*}
From which we deduce that
\begin{align*}
  &\mathcal{S}_{T^D_{X_\star}}(X_\star-X_{\widehat{\mathbb{P}}})
=X_\star-\widehat{\mathcal{R}}(X_\star)+\widehat{H}_{\widehat{F}_{X_\star}}
  -(X_{\widehat{\mathbb{P}}}-\widehat{\mathcal{R}}(X_{\widehat{\mathbb{P}}}) +\widehat{H}_{\widehat{F}_{X_\star}}\\
  &-\widehat{K}(X_\star,X_{\widehat{\mathbb{P}}}))\geq \widehat{K}(X_\star,X_{\widehat{\mathbb{P}}})-(X_{\widehat{\mathbb{P}}}-\widehat{\mathcal{R}}(X_{\widehat{\mathbb{P}}}))+\widehat{K}_{\widehat{F}}(X_\star) \geq \widehat{K}_{\widehat{F}}(X_\star).
\end{align*}
%We further have
%\begin{align*} %\label{stein-T0}
%  \mathcal{S}_{T_{X_\star}} (X_\star) &=H_{F_{X_\star}}+ K_F(X_\star) \geq K_F(X_\star)\geq 0,
%\end{align*}
It follows from Lemma~\ref{thm2p29} with the assumption $\rho ({D}_{\widehat{F}}) < 1$ that $\rho (\widehat{T}^D_{X_\star}) < 1$. That is, $X_\star\in\widehat{\mathbb{T}}\cap\widehat{\mathbb{P}}$.
\item[(iv)$\Rightarrow$(iii):] Suppose that the statement (iv) holds. If we let $\widehat{F}:=\widehat{F}_{X_\star}$, then
  $\rho (D_{\widehat{F}}) = \rho (\widehat{T}^D_{X_\star}) < 1$.

   %\item[(iii)$\Leftrightarrow$(iv):]
%   It is similar to the proof of (i) and (ii).
%   The equivalent between (iii) and (iv) can be found in our recent work.
   \end{enumerate}
\end{proof}
%\begin{Remark}\label{r0}
%It is easy to check $F_X=R^{-1}B^H\overline{X}T_X$. Then,
%\[
%\widehat{F}_X:=\bb \overline{F}_X T_X \\ F_X-F_H\eb
%=\bb \overline{R}^{-1}\overline{B}^H{X}\widehat{T}_X \\ R^{-1}B^H(\overline{X}T_X-\overline{H}T_H)\eb.
%\]
%\end{Remark}
\begin{Remark}\label{r1}
\par\noindent
\begin{enumerate}
  \item Assume that $\mathcal{R}_{\leq}\cap\mathbb{P}\neq\phi$. Recall that $\phi\neq\mathcal{S}_\geq\subset\mathcal{R}_\geq\cap\mathbb{P}\cap\mathbb{T}
    \subset\mathbb{T}\cap\mathbb{P}$ if $\mathbb{T}\neq\phi$. Thus, $\mathbb{T}\neq\phi$ if and only if $\mathbb{T}\cap\mathbb{P}\neq\phi$ if and only if $\mathbb{F}\neq\phi$, an analogous procedure yields
   $\widehat{\mathbb{T}}\neq\phi$ if and only if $\widehat{\mathbb{F}}\neq\phi$.
  \item
  %Let $\mathbb{F}:=\{F\in\mathbb{C}^{m\times n}|\rho(\widehat{A}_F)<1\}$ and  $\widehat{\mathbb{F}}:=\{\widehat{F}\in\mathbb{C}^{2m\times n}|\rho(\widehat{A}-\widehat{B}\widehat{F})<1\}$.
%  Assume that $\mathbb{T}\neq\phi$ and $\widehat{\mathbb{T}}\neq\phi$.
  We conclude that
    $\mathcal{S}_{\geq}\equiv\mathcal{S}_{\geq}^F:= \bigcup\limits_{F\in\mathbb{F}} \{ X\in \mathbb{H}_n\, |\, \mathcal{C}_{A_F} (X)\geq H_{F}=H+F^HRF\}$ if $\mathbb{T}\neq\phi$  and
   $\widehat{\mathcal{S}}_{\geq}\equiv\widehat{\mathcal{S}}_{\geq}^F:= \bigcup\limits_{\widehat{F}\in\widehat{\mathbb{F}}} \{ X\in \mathbb{H}_n\, |\, \mathcal{S}_{\widehat{A}-\widehat{B}\widehat{F}} (X)\geq \widehat{H}_{\widehat{F}}=\widehat{H}+\widehat{F}^H\widehat{R}\widehat{F}\}$ if $\widehat{\mathbb{T}}\neq\phi$.
   Indeed, $X\in\mathcal{S}_{\geq}$ implies that $\mathcal{C}_{T_X}(X)\geq H_{F_X}$ for
   some $X\in\mathbb{T}$ with $\rho(\widehat{T}_X)<1$. Then, $\mathcal{C}_{A_{F_X}}(X)\geq H_{F_X}$ since $T_X\equiv A-BF_X$. Thus, $\mathcal{S}_{\geq}\subset\mathcal{S}_{\geq}^F$. Conversely, let $\mathbb{F}:=\{F\in\mathbb{C}^{m\times n}|\rho(\widehat{A}_F)<1\}$ and $X_\star = \mathcal{C}_{A_F}^{-1} (H_F)$ for an element $F\in \mathbb{F}$. Then, $\mathcal{C}_{A_F}(X_\star)=H_F$, i.e., $X_\star\in\mathcal{S}_{\geq}^F$. The same arguments relying on Theorem~\ref{thm3p3} provide that $X_\star\in\mathbb{T}$. %That is, $X_\star\in\mathcal{R}_\geq\cap\mathbb{P}$.
Furthermore,
\begin{align*}
\mathcal{C}_{T_{X_\star}}(X_\star)-H_{F_{X_\star}}&=X_\star-\mathcal{R}(X_\star)=\mathcal{C}_{A_F}(X_\star)-H_F
+K_F(X_\star)\\
&=K_F(X_\star)\geq 0.
\end{align*}
That is, $X_\star\in\mathcal{S}_{\geq}$. We conclude that $\mathcal{S}_{\geq}^F\subset\mathcal{S}_{\geq}$. Therefore, $\mathcal{S}_{\geq}=\mathcal{S}_{\geq}^F$.

 A similar proof can be given for the
$\widehat{\mathcal{S}}_{\geq}=\widehat{\mathcal{S}}_{\geq}^F$.
\end{enumerate}

\end{Remark}

%\begin{Lemma}\label{thm2p30}
%Assume that $\mathcal{R}_{\leq}\cap\mathbb{P}\neq\phi$.
%\par\noindent
% \begin{enumerate}
%   \item
%   Let $\mathbb{F}:=\{F\in\mathbb{C}^{m\times n}|\rho(\widehat{A}_F)<1\}$ and  $\widehat{\mathbb{F}}:=\{\widehat{F}\in\mathbb{C}^{2m\times n}|\rho(\widehat{A}-\widehat{B}\widehat{F})<1\}$.
%   Then, $\mathbb{T}\neq\phi$ if and only if $\mathbb{F}\neq\phi$, and
%   $\widehat{\mathbb{T}}\neq\phi$ if and only if $\widehat{\mathbb{F}}\neq\phi$.
%   %Moreover,
%%   $\mathcal{S}_{\geq}\equiv \bigcup\limits_{F\in\mathbb{F}} \{ X\in \mathbb{H}_n\, |\, \mathcal{C}_{A_F} (X)\geq H_{F}=H+F^HRF\}$ and
%%   $\widehat{\mathcal{S}}_{\geq}\equiv \bigcup\limits_{\widehat{F}\in\widehat{\mathbb{F}}} \{ X\in \mathbb{H}_n\, |\, \mathcal{S}_{\widehat{A}-\widehat{B}\widehat{F}} (X)\geq \widehat{H}_{\widehat{F}}=\widehat{H}+\widehat{F}^H\widehat{R}\widehat{F}\}$.
%   \item ${\mathcal{S}}_{\geq} \subset\widehat{\mathcal{S}}_{\geq} $.
% \end{enumerate}
%  %\it
%\end{Lemma}
\section{Inheritance properties of the CDARE} \label{sec:inheri}

In this section, as mentioned previously, we shall clarify which properties will be inherited from the maximal solution of the CDARE \eqref{cdare}.
Some results of the previous section will be applied in the following properties.
\begin{Proposition} \label{lem:inheri}
%If $\mathbb{T} \neq \emptyset$ and $\mathcal{R}_\leq \cap \mathbb{P} \neq \emptyset$, then
Assume that $\mathcal{R}_\leq \cap \mathbb{P} \neq \emptyset$. The following statements hold:
\begin{enumerate}
  \item[(i)] %$\widehat{\mathbb{F}} \neq \emptyset$ or the matrix pair $(\widehat{A}, \widehat{B})$ is stabilizable.
  %Assume that $\mathcal{R}_\leq \cap \mathbb{P} \neq \emptyset$.
  $\mathbb{T}\subseteq\widehat{\mathbb{T}}$ and $\widehat{\mathbb{F}}\neq\phi$ if ${\mathbb{F}\neq\phi}$.% and $\mathbb{F}\subseteq\widehat{\mathbb{F}}$.
  \item[(ii)] %$\mathbb{P}$ is a nonempty subset of $\widehat{\mathbb{P}}$.
  $\mathbb{P}\subseteq\widehat{\mathbb{P}}$. The converse is also true if $X\in\mathcal{R}_=\cap \mathbb{P}$.
  \item[(iii)] $\mathcal{S}_\geq=\mathcal{S}_\geq^F\subseteq\widehat{\mathcal{S}}_{\geq}^F=\widehat{\mathcal{S}}_\geq$.
\end{enumerate}
\end{Proposition}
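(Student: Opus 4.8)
The plan is to handle the three items with the explicit feedback construction from Lemma~\ref{thm3p3} and the Stein identities of Lemma~\ref{lem2p1}, isolating the one genuinely hard inclusion. For the second half of (i), given $F\in\mathbb F$ I would recycle the block feedback $\widehat F=\bb\overline F A_F\\ F-F_H\eb$ used in the (i)$\Rightarrow$(iii) step of Lemma~\ref{thm3p3}, for which $\widehat A-\widehat B\widehat F=\widehat A_F$ and hence $\rho(\widehat A-\widehat B\widehat F)=\rho(\widehat A_F)<1$; this produces an element of $\widehat{\mathbb F}$, so $\widehat{\mathbb F}\neq\phi$. For $\mathbb P\subseteq\widehat{\mathbb P}$ in (ii) I would take $X\in\mathbb P$, so that $R_X>0$ and $X\in\mathrm{dom}(\mathcal R)\subseteq\mathrm{dom}(\widehat{\mathcal R})$, and read positive definiteness of $\widehat R_X$ off the block form \eqref{hRx}: the leading block is $R_X>0$ and the Schur complement with respect to $R_X$ is again $R_X>0$ (the computation of Section~\ref{sec2}), whence $\widehat R_X>0$, i.e. $X\in\widehat{\mathbb P}$. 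The converse on $\mathcal R_=\cap\mathbb P$ is the trivial direction, since $R_X$ is the leading principal block of $\widehat R_X$ in \eqref{hRx}, so $\widehat R_X>0$ forces $R_X>0$.

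For (iii) the two outer equalities are exactly Remark~\ref{r1}(2), so only $\mathcal S_\geq^F\subseteq\widehat{\mathcal S}_\geq^F$ requires work. Given $X\in\mathcal S_\geq^F$ realised through $F\in\mathbb F$ with $\mathcal C_{A_F}(X)\geq H_F$, I would pair it with the same $\widehat F=\bb\overline F A_F\\ F-F_H\eb\in\widehat{\mathbb F}$, so that $\widehat A-\widehat B\widehat F=\widehat A_F=\overline{A_F}A_F$. Conjugating the inequality $\mathcal C_{A_F}(X)\geq H_F$ (conjugation preserves the Loewner order) and combining it with the original exactly as in the opening line of the proof of Lemma~\ref{thm2p29}, via the identity $\mathcal S_{\overline{A_F}A_F}(X)=\mathcal C_{A_F}(X)+A_F^H\overline{\mathcal C_{A_F}(X)}A_F$, yields $\mathcal S_{\widehat A_F}(X)\geq H_F+A_F^H\overline{H_F}A_F$. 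It then remains to recognise the right-hand side as $\widehat H_{\widehat F}=\widehat H+\widehat F^H\widehat R\widehat F$. For this I would use \eqref{Req-a} of Lemma~\ref{lem2p1} at $X=H$: since $\widehat H=\mathcal R(H)$, that identity gives $\widehat H=A_F^H\overline H A_F+H_F-(F-F_H)^H R_H(F-F_H)$, and substituting this together with $\widehat F^H\widehat R\widehat F=A_F^H\overline F^H\overline R\overline F A_F+(F-F_H)^H R_H(F-F_H)$ collapses $\widehat H+\widehat F^H\widehat R\widehat F$ to exactly $H_F+A_F^H\overline{H_F}A_F$. Hence $X\in\widehat{\mathcal S}_\geq^F$, and this step I expect to go through cleanly.

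The delicate item is $\mathbb T\subseteq\widehat{\mathbb T}$, where the obstacle is that $\widehat T_X=\overline{T_X}T_X$ and $\widehat T^D_X=(I+\widehat G X)^{-1}\widehat A$ do not agree away from solutions. Writing $\widehat{\mathcal R}(X)=\mathcal R(\mathcal R(X))$ in closed-loop form through Lemma~\ref{lem2p1} and matching the quadratic terms gives the factorisation $\widehat T^D_X=\overline{T_X}\,T_{\mathcal R(X)}$, which reduces to $\widehat T_X$ only when $\mathcal R(X)=X$; consequently $\rho(\widehat T_X)<1$ does not by itself force $\rho(\widehat T^D_X)<1$, and a bare eigenvector chase fails. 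My plan is to follow the (iii)$\Rightarrow$(iv) step of Lemma~\ref{thm3p3}: for $X\in\mathbb T$ set $F=F_X$, so $\widehat A_F=\overline{T_X}T_X=\widehat T_X$ has spectral radius below one and the associated $\widehat F$ lies in $\widehat{\mathbb F}$ with $D_{\widehat F}=\widehat T_X$; then produce $Y\geq 0$ with $\mathcal S_{\widehat T^D_X}(Y)\geq\widehat K_{\widehat F}(X)$ and invoke the Stein-operator analogue of Lemma~\ref{thm2p29} to exclude eigenvalues of $\widehat T^D_X$ of modulus $\geq 1$. Constructing this certificate $Y$ for an arbitrary $X\in\mathbb T$ — presumably by applying the order-preserving operator $\mathcal S_{D_{\widehat F}}^{-1}$ to a suitable bound and using $\mathcal R_\leq\cap\mathbb P\neq\emptyset$ to supply a comparison point in $\widehat{\mathcal R}_\leq\cap\widehat{\mathbb P}$ — is the crux, because here $X$ is not itself built from $\mathcal S_{D_{\widehat F}}^{-1}$ and need not dominate such a comparison point.
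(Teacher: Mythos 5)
Your treatment of the second half of (i) and of (iii) matches the paper. For (iii) the paper also reduces everything to the identity $\widehat H_{\widehat F}=H_F+A_F^H\overline{H_F}A_F$ with $\widehat F=\bb \overline F A_F\\ F-F_H\eb$ and $D_{\widehat F}=\widehat A_F$; it verifies that identity by computing $K_F(H)-K_F(0)$ from \eqref{Req-a}, whereas you use $\widehat H=\mathcal R(H)$ together with \eqref{Req-a} at $X=H$ — an equivalent and, if anything, cleaner computation. Those parts go through.

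The first gap is $\mathbb T\subseteq\widehat{\mathbb T}$, which you explicitly leave open. You are right that $\widehat T^D_X=\overline{T_X}\,T_{\mathcal R(X)}$ differs from $\widehat T_X=\overline{T_X}\,T_X$ away from $\mathcal R_=$, so a bare eigenvector argument fails, and that one would need a certificate $Y\geq 0$ with $\mathcal S_{\widehat T^D_X}(Y)\geq \widehat K_{\widehat F}(X)$; but you do not construct it. For comparison: the paper's entire proof of (i) is the sentence that it follows from Lemma~\ref{thm3p3} and Remark~\ref{r1}, and what those results actually yield is the chain $\mathbb T\neq\emptyset\Rightarrow\mathbb F\neq\emptyset\Rightarrow\widehat{\mathbb F}\neq\emptyset\Rightarrow\widehat{\mathbb T}\neq\emptyset$ — a transfer of \emph{nonemptiness}, with the witness being the auxiliary matrix $X_\star=\mathcal S_{D_{\widehat F}}^{-1}(Z)$ rather than the given $X$. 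That weaker reading is all that Theorem~\ref{thm:inheri-1} ever uses. So you have correctly isolated a point where the literal set inclusion is not established by the cited results, but your proposal does not close it either.

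The second gap is your proof of $\mathbb P\subseteq\widehat{\mathbb P}$. The Schur complement of the leading block $\overline{R_X}$ in \eqref{hRx} is $R+B^H(\overline H+\overline A^HX\overline A-\overline A^HX\overline B\,\overline{R_X}^{-1}\overline B^HX\overline A)B=R+B^H\overline{\mathcal R(X)}B=R_{\mathcal R(X)}$, not $R_X$; the simplification to $R_X$ in Section~\ref{sec2} silently uses $\overline H+\overline A^HX(I+\overline GX)^{-1}\overline A=\overline X$, i.e.\ $X=\mathcal R(X)$. Hence $R_X>0$ alone does not give $\widehat R_X>0$ — one also needs $R_{\mathcal R(X)}>0$ — and this is precisely why the paper carries out its positivity computation for this direction only under the hypotheses $X\in\mathcal R_=$ and $R_X>0$. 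What holds unconditionally is the reverse implication: $\widehat R_X>0$ forces its principal block $\overline{R_X}>0$, hence $R_X>0$. Your assignment of ``unconditional'' and ``conditional'' directions follows the wording of the statement, but the paper's own proof conditions the Schur-complement direction on $\mathcal R_=$; the two halves of (ii) are effectively swapped between statement and proof, and your argument inherits that inconsistency instead of resolving it.
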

\begin{proof}
\par\noindent
\begin{enumerate}
\item[(i)]
The result is a direct consequence of a part of  Lemma~\ref{thm3p3} and Remark~\ref{r1}.
\item[(ii)]
  Observed that
\begin{align*}
 \widehat{R}_X&=\bb  R_X &\overline{B}^H X\overline{A}B\\ B^H \overline{A}^H X\overline{B} & R+B^H(\overline{H}+\overline{A}^H X\overline{A}) B\eb.
 \end{align*}
Thus, $R_X>0$ if  $\widehat{R}_X>0$.

 Conversely, the positiveness of $\widehat{R}_X$ can be verified directly by going through the
 following computation
 \begin{align*}
 \widehat{R}_X &=
 \bb  R_X &\overline{B}^H X\overline{A}B\\ B^H \overline{A}^H X\overline{B} & R+B^H(\overline{X}+\overline{A}^H X\overline{B}(\overline{R_X})^{-1}\overline{B}^H
 X\overline{A}) B\eb\\
 &=0\oplus R_X+\bb \overline{R_X} \\ B^H\overline{A}^HX\overline{B}  \eb
 (\overline{R_X})^{-1}\bb  \overline{R_X} &
 \overline{B}^H X \overline{A} B
  \eb>0,
 \end{align*}
 if $X\in\mathcal{R}_=$ and $R_X>0$.
 \item[(iii)]
   Let $X\in{\mathcal{S}}_{\geq}={\mathcal{S}}_{\geq}^F$. Namely, $\mathcal{C}_{A_F} (X)\geq H_{F}=H+F^HRF$ for some $F\in\mathbb{C}^{m\times m}$ with $\rho(\widehat{A}_F)<1$, or
 \begin{align}\label{123}
 \mathcal{S}_{\widehat{A}_F}(X)\geq H_{F}+A_F ^H\overline{H_{F}}A_F .
 \end{align}
Following the formula~\eqref{Req-a} we have
\begin{align*}
  &K_F(H)-K_F(0)=(H-R(H)-\mathcal{C}_{A_F}(H)+H_F)-(0-R(0)-\mathcal{C}_{A_F}(0)\\  &+H_F)=A_F^H\overline{H}A_F-A^H\overline{H}(I+G\overline{H})^{-1}A.
\end{align*}
Hence,
  \begin{align*}%\label{789}
&H_{F}+A_F ^H\overline{H}_{F}A_F=H+K_F(0)+A_F ^H\overline{H}A_F+A_F ^H\overline{K_F(0)}A_F \\
&=H+K_F(0)+ K_F(H)-K_F(0)+A^H\overline{H}(I+G\overline{H})^{-1}A+A_F ^H\overline{K_F(0)}A_F \\
&=H+A^H\overline{H}(I+G\overline{H})^{-1}A+A_F ^H\overline{K_F(0)}A_F + K_F(H)\\
&=\widehat{H}+\widehat{F}^H\widehat{R}\widehat{F}=\widehat{H}_{\widehat{F}},
 \end{align*}
 where $\widehat{F}:=\bb \overline{F}A_F \\ F-F_H\eb\in\mathbb{C}^{2m\times n}$.
Note that
\[
\widehat{F}^H\widehat{R}\widehat{F}=A_F ^H\overline{K_F(0)}A_F + K_F(H).
\]
  It is also not difficult to see that
 \begin{align}\label{456}
 \widehat{A}_F=(\overline{A}-\overline{BF})(A-BF)=D_{\widehat{F}}:=\widehat{A}-\widehat{B}\widehat{F}.
 \end{align}
 %and
% \begin{align*}%\label{789}
%&H_{F}+A_F ^H\overline{H}_{F}A_F=H+F^H R F+A_F ^H(\overline{H}+\overline{F}^H \overline{R} \overline{F} )A_F\\
%&=H+A^H(\overline{H}-\overline{H}BR_H^{-1}B^H\overline{H})AF^H R F+A_F ^H(\overline{H}+\overline{F}^H \overline{R} \overline{F} )A_F\\
%&=\widehat{H}_{\widehat{F}}.
% \end{align*}
Therefore, \eqref{123} is equivalent to
\[
\mathcal{S}_{D_{\widehat{F}}}(X)=\mathcal{S}_{\widehat{A}_F} (X)\geq H_{F}+A_F ^H\overline{H}_{F}A_F= \widehat{H}_{\widehat{F}}.
\]
That is, $X\in\widehat{\mathcal{S}}_{\geq}^F=\widehat{\mathcal{S}}_{\geq}$.
\end{enumerate}
\end{proof}

Under some reasonable conditions, it will be shown that the transformed DARE \eqref{dare2} inherits the maximal solution $X_M$ of the CDARE \eqref{cdare}
in the following theorem.
\begin{Theorem} \label{thm:inheri-1}
 If the assumptions in \eqref{ma} and \eqref{a1} hold, then the CDARE \eqref{cdare} and its associated DARE \eqref{dare2}
 have the same maximal solution $X_M$.
\end{Theorem}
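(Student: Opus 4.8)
The plan is to prove the equality $X_M = Y_M$ by \emph{interleaving} the two fixed-point iterations, which delivers the coincidence of the maximal solutions in one stroke and sidesteps the delicate question of whether the operator $\mathcal{R}$ is order preserving (recall that $G=BR^{-1}B^H$ need not be positive semidefinite, and the conjugate Stein operator $\mathcal{C}_A$ is in general neither order preserving nor order reversing). One inequality is already available: as noted before the theorem, $\mathcal{R}_= \subseteq \widehat{\mathcal{R}}_=$, and by Proposition~\ref{lem:inheri}(ii) an element of $\mathcal{R}_= \cap \mathbb{P}$ lies in $\widehat{\mathbb{P}}$, so $X_M \in \widehat{\mathcal{R}}_\leq \cap \widehat{\mathbb{P}}$, whose maximal element is $Y_M$; hence $X_M \leq Y_M$. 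The interleaving argument below in fact yields the reverse inequality as well, so I would present it as the core of the proof.

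First I would verify that Theorem~\ref{thm:Xmax-dare} is applicable, i.e. that its hypotheses $\widehat{\mathbb{T}} \neq \emptyset$ and $\widehat{\mathcal{R}}_\leq \cap \widehat{\mathbb{P}} \neq \emptyset$ are inherited from \eqref{ma} and \eqref{a1}. Starting from $\mathbb{T} \neq \emptyset$ in \eqref{ma}, Remark~\ref{r1} gives $\mathbb{F} \neq \emptyset$; Proposition~\ref{lem:inheri}(i) (equivalently the implication (i)$\Rightarrow$(iii) in Lemma~\ref{thm3p3}) yields $\widehat{\mathbb{F}} \neq \emptyset$, and Remark~\ref{r1} converts this back to $\widehat{\mathbb{T}} \neq \emptyset$. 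The second hypothesis follows since Lemma~\ref{thm3p3} guarantees $\widehat{\mathcal{R}}_= \cap \widehat{\mathbb{P}} \neq \emptyset$ once $\mathcal{R}_\leq \cap \mathbb{P} \neq \emptyset$ holds, and $\widehat{\mathcal{R}}_= \subseteq \widehat{\mathcal{R}}_\leq$. Thus both the maximal solution $X_M$ of Theorem~\ref{thm:Xmax} and the maximal solution $Y_M$ of Theorem~\ref{thm:Xmax-dare} exist.

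The heart of the argument is to run both iterations from a common initial matrix. By Remark~\ref{r1} the set $\mathcal{S}_\geq = \mathcal{S}_\geq^F$ is nonempty (any $F \in \mathbb{F}$ produces $\mathcal{C}_{A_F}^{-1}(H_F) \in \mathcal{S}_\geq$), and Proposition~\ref{lem:inheri}(iii) provides the inclusion $\mathcal{S}_\geq \subseteq \widehat{\mathcal{S}}_\geq$. I would therefore fix one $X_0 \in \mathcal{S}_\geq$ and set $Y_0 := X_0 \in \widehat{\mathcal{S}}_\geq$. Let $\{X_k\}$ be the CDARE iterates $X_{k+1} = \mathcal{R}(X_k)$ of \eqref{fpi} and $\{Y_k\}$ the DARE iterates $Y_{k+1} = \widehat{\mathcal{R}}(Y_k)$ of \eqref{fpi-hat}. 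By Theorem~\ref{thm:Xmax}(i) every $X_k$ lies in $\mathrm{dom}(\mathcal{R})$, so all compositions below are legitimate. Since $\widehat{\mathcal{R}} = \mathcal{R}\circ\mathcal{R}$ by \eqref{dare2-a}, a one-line induction gives $Y_k = X_{2k}$ for all $k \geq 0$: the base case is $Y_0 = X_0$, and $Y_{k+1} = \mathcal{R}(\mathcal{R}(X_{2k})) = \mathcal{R}(X_{2k+1}) = X_{2k+2}$.

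Finally I would invoke the two convergence statements. Theorem~\ref{thm:Xmax}(iii) gives $X_k \to X_M$, so its subsequence $\{X_{2k}\}$ also converges to $X_M$, while Theorem~\ref{thm:Xmax-dare}(iii) gives $Y_k \to Y_M$. Because $\{Y_k\}$ and $\{X_{2k}\}$ are literally the same sequence, uniqueness of limits forces $X_M = Y_M$, as asserted. I expect the only real obstacle to be bookkeeping rather than conceptual: one must check carefully that the common starting matrix is admissible for \emph{both} iterations (this is precisely the content of Proposition~\ref{lem:inheri}(iii)) and that the hypotheses of Theorem~\ref{thm:Xmax-dare} genuinely propagate through the chain Remark~\ref{r1}--Proposition~\ref{lem:inheri}--Lemma~\ref{thm3p3}; the interleaving identity $Y_k = X_{2k}$ itself is routine once $\widehat{\mathcal{R}} = \mathcal{R}\circ\mathcal{R}$ is used.
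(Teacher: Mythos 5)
Your proposal is correct and follows essentially the same route as the paper's own proof: verify the hypotheses of Theorem~\ref{thm:Xmax-dare} via Lemma~\ref{thm3p3}, Remark~\ref{r1} and Proposition~\ref{lem:inheri}, start both iterations from a common $X_0\in\mathcal{S}_\geq\subseteq\widehat{\mathcal{S}}_\geq$, observe $Y_k=X_{2k}$ since $\widehat{\mathcal{R}}=\mathcal{R}\circ\mathcal{R}$, and conclude $X_M=Y_M$ by the subsequence argument. Your added preliminary observation that $X_M\leq Y_M$ and your more explicit bookkeeping of how the hypotheses propagate are harmless refinements of the same argument.
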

\begin{proof}
From Theorem~\ref{thm:Xmax}, we have deduced that the CDARE \eqref{cdare} has the maximal solution $X_M\in \mathbb{P}$
with $\rho (\widehat{T}_{X_M})  \leq 1$ under the assumptions in \eqref{ma}. That is, $X_M \in \mathcal{R}_= \cap \mathbb{P}$ and hence
$X_M \in \widehat{\mathcal{R}}_= \cap \widehat{\mathbb{P}}$ and $\widehat{\mathbb{F}} \neq \emptyset$ follow from Proposition~\ref{lem:inheri}.
The result together with Proposition~\ref{lem:inheri} imply that the sufficient conditions of Theorem~\ref{thm:Xmax-dare} are fulfilled.

If we select $Y_0 = X_0 \in \mathcal{S}_\geq$, then $Y_0 \in \widehat{\mathcal{S}}_\geq$ is also true from the part (iii) of Proposition~\ref{lem:inheri}.
Consequently, from Theorem~\ref{thm:Xmax-dare}, the sequence $\{X_k\}_{k=0}^\infty$ generated by the FPI \eqref{fpi} converges nonincreasingly to the maximal
solution $X_M$ of the CDARE \eqref{cdare}.

On the other hand, from Theorem~\ref{thm:Xmax-dare} with $Y_0=X_0$, the sequence $\{Y_k\}_{k=0}^\infty$ generated by the FPI \eqref{fpi-hat}
converges nonincreasingly to the maximal solution $Y_M$ of its induced DARE \eqref{dare2}. Moreover, it is easily seen that
$Y_k  = X_{2k}\in \mathrm{dom}(\mathcal{R})$ for $k\geq 1$. That is, $\{Y_k\}_{k=0}^\infty$ is a subsequence of $\{X_k\}_{k=0}^\infty$
and thus they must have the same limit $X_M = Y_M$.
\end{proof}
%\begin{Theorem}\label{maxeq}
%Under the assumptions of \eqref{ma}, the maximal solution $\widehat{X}_M$ of DARE~\eqref{dare2} exists, which coincides with the maximal solution $X_M$ of DARE~\eqref{cdare}.
%\end{Theorem}
%\begin{proof}
% The conditions~\eqref{ma} together with Lemma~\eqref{thm2p30} implies that $X_M\in\mathcal{R}_=\cap\mathbb{P}\subset\widehat{\mathcal{R}}_\leq\cap\widehat{\mathbb{P}}$.
% Thus, $\widehat{\mathcal{R}}_\leq\cap\widehat{\mathbb{P}}\neq\phi$. From Theorem~\ref{thm3p3} we know that $\widehat{\mathbb{T}}\neq\phi$ since $\mathbb{T}\neq\phi$.
%\end{proof}
In addition, according to Theorem~\ref{thm:inheri-1}, it is natural to determine whether the closed-loop matrices $\widehat{T}_{X_M}$ and
$\widehat{T}_{X_M}^D$ defined by \eqref{Fx} and \eqref{hFx}, respectively, might have the same structure. Indeed, the invariance of these closed-loop matrices
follows immediately from a more general result presented below.

\begin{Theorem} \label{thm:inheri-2}
  Assume that $X\in\mathrm{dom}(\mathcal{R})$. Then,
  \begin{enumerate}
    \item $\mathcal{R}(X)\in\mathrm{dom}(\mathcal{R})$.
    \item %Let $\widehat{T}^D_X:=(I+\widehat{G}X)^{-1}\widehat{A}$ and $\widehat{T}^C_X:=\overline{T_X}T_{\mathcal{R}(X)}$.   Then
        $\widehat{T}^D_X=\overline{T_X}T_{\mathcal{R}(X)}$.
In particular, the closed-loop matrices $\widehat{T}_X$ and $\widehat{T}_X^D$ coincide for any $X\in\mathcal{R}_=$.
  \end{enumerate}
\end{Theorem}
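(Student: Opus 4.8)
The plan is to dispatch the two assertions in the stated order, because the factor $T_{\mathcal R(X)}=(I+G\overline{\mathcal R(X)})^{-1}A$ appearing in part~2 is only meaningful once part~1 guarantees $\mathcal R(X)\in\mathrm{dom}(\mathcal R)$, i.e.\ that $I+G\overline{\mathcal R(X)}$ (equivalently $R_{\mathcal R(X)}$) is invertible. So I would first secure the domain membership, then prove the factorization by inverse-free resolvent algebra, and finally read off the ``in particular'' clause.

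For part~1 I would exploit the inclusion $\mathrm{dom}(\mathcal R)\subseteq\mathrm{dom}(\widehat{\mathcal R})$ recorded in Section~\ref{sec2}: from $X\in\mathrm{dom}(\mathcal R)$ we obtain that $\widehat R_X$ is nonsingular. Computing directly from $\widehat R=\overline R\oplus R_H$ and $\widehat B=[\,\overline B\ \ \overline A B\,]$, the leading $m\times m$ block of $\widehat R_X=\widehat R+\widehat B^H X\widehat B$ is $\overline R+\overline B^H X\overline B=\overline{R_X}$, which is nonsingular by hypothesis. The Schur complement of $\widehat R_X$ relative to this block is therefore well defined, and after simplifying the conjugate resolvent by the Sherman--Morrison--Woodbury identity $X-X\overline B\,\overline{R_X}^{-1}\overline B^H X=X(I+\overline G X)^{-1}$ it equals exactly $R_{\mathcal R(X)}=R_H+B^H\overline A^H X(I+\overline G X)^{-1}\overline A B$. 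Hence $\det\widehat R_X=\det\overline{R_X}\cdot\det R_{\mathcal R(X)}$, so nonsingularity of $\widehat R_X$ forces $R_{\mathcal R(X)}$ nonsingular, i.e.\ $\mathcal R(X)\in\mathrm{dom}(\mathcal R)$.

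The heart of the matter is part~2, which I would prove as an identity \emph{before} inverting anything. First I record two reductions of the transformed data: since $R_H$ is nonsingular one checks $I-BR_H^{-1}B^H\overline H=(I+G\overline H)^{-1}$, so \eqref{hatA} collapses to $\widehat A=\overline A(I+G\overline H)^{-1}A$, while \eqref{hatG} already reads $\widehat G=\overline G+\overline A(I+G\overline H)^{-1}G\overline A^H$. Writing $M:=(I+G\overline H)^{-1}$, $N:=(I+\overline G X)^{-1}$ and using the conjugated equation $\overline{\mathcal R(X)}=\overline A^H X N\overline A+\overline H$, I would establish the pre-multiplied identity
\begin{align*}
(I+\widehat G X)\,N\,\overline A=\overline A\,M\,(I+G\overline{\mathcal R(X)}).
\end{align*}
Both sides reduce to $\overline A+\overline A MG\overline A^H X N\overline A$: on the left the $\overline G$-part of $\widehat G$ combines with $N\overline A$ into $(I+\overline G X)N\overline A=\overline A$ and the $\overline A MG\overline A^H$-part yields the cross term, while on the right $M(I+G\overline H)=I$ removes the $\overline H$-contribution and leaves the same cross term. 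Right-multiplying by $(I+G\overline{\mathcal R(X)})^{-1}A$ (legitimate by part~1) and left-multiplying by $(I+\widehat G X)^{-1}$ (legitimate since $X\in\mathrm{dom}(\widehat{\mathcal R})$) converts this into
\begin{align*}
(I+\widehat G X)^{-1}\widehat A=N\,\overline A\,(I+G\overline{\mathcal R(X)})^{-1}A=\overline{T_X}\,T_{\mathcal R(X)},
\end{align*}
where I used $\overline{T_X}=N\overline A$ and $T_{\mathcal R(X)}=(I+G\overline{\mathcal R(X)})^{-1}A$. Since $\widehat T_X^D=(I+\widehat G X)^{-1}\widehat A$ by \eqref{hFx}, this is exactly the claim. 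The ``in particular'' statement is then immediate: for $X\in\mathcal R_=$ one has $\mathcal R(X)=X$, hence $T_{\mathcal R(X)}=T_X$, and the identity becomes $\widehat T_X^D=\overline{T_X}T_X=\widehat T_X$ by the definition in \eqref{Fx}.

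I expect the only real obstacle to be the bookkeeping in the pre-multiplied identity: the cancellation surfaces only after one factors $(I+G\overline H)$ out of $I+G\overline{\mathcal R(X)}$ and matches the two summands of $\widehat G$ against the $\overline G X$ and $\overline A MG\overline A^H X$ contributions. Committing to the inverse-free form and invoking invertibility only at the last step is what prevents having to manipulate four resolvents at once and keeps every intermediate expression legitimate.
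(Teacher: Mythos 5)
Your part~2 is correct and is, in substance, the same computation as the paper's. The paper uses the Sherman--Morrison--Woodbury formula to write out $(I+\widehat{G}X)^{-1}$ and $\Omega_X=(I+G\overline{\mathcal{R}(X)})^{-1}$ explicitly and then multiplies through; you instead verify the inverse-free identity $(I+\widehat{G}X)\overline{T_X}=\overline{A}(I+G\overline{H})^{-1}(I+G\overline{\mathcal{R}(X)})$ and invert only at the end. Both arguments rest on the same three reductions, $\widehat{A}=\overline{A}(I+G\overline{H})^{-1}A$, $\widehat{G}=\overline{G}+\overline{A}(I+G\overline{H})^{-1}G\overline{A}^H$ from \eqref{hatG}, and $\overline{\mathcal{R}(X)}=\overline{H}+\overline{A}^HX(I+\overline{G}X)^{-1}\overline{A}$, and the ``in particular'' clause is handled identically. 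Your organization is slightly cleaner because every intermediate expression is manifestly well defined, but no new idea is involved.

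The genuine issue is part~1, and your own computation makes it visible. You import the inclusion $\mathrm{dom}(\mathcal{R})\subseteq\mathrm{dom}(\widehat{\mathcal{R}})$ from Section~\ref{sec2} to get $\widehat{R}_X$ nonsingular, and then conclude from $\det\widehat{R}_X=\det\overline{R_X}\cdot\det R_{\mathcal{R}(X)}$ that $R_{\mathcal{R}(X)}$ is nonsingular. But that factorization says precisely that, once $R_X$ is nonsingular, nonsingularity of $\widehat{R}_X$ and of $R_{\mathcal{R}(X)}$ are \emph{equivalent}: the premise you invoke is the conclusion you want, so the step is circular unless the Section~\ref{sec2} inclusion has an independent proof. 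It does not: the Schur complement of \eqref{hRx} computed there is claimed to equal $R_X$, which (as your own simplification shows) holds only when $\overline{\mathcal{R}(X)}=\overline{X}$, i.e.\ for $X\in\mathcal{R}_=$; in general the complement is $R_{\mathcal{R}(X)}$ and it can be singular. Indeed, for $n=m=1$ with $a=b=r=1$, $h=0$ one has $\mathcal{R}(x)=x/(1+x)$, and $x=-1/2\in\mathrm{dom}(\mathcal{R})$ while $\mathcal{R}(x)=-1\notin\mathrm{dom}(\mathcal{R})$ and $\widehat{R}_x$ is singular, so part~1 cannot hold without a further hypothesis (such as $X\in\mathcal{R}_=$ or $\widehat{R}_X>0$). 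To be fair, the paper's own proof of part~1 --- asserting the existence of $\Omega_X$ and $(I+\widehat{G}X)^{-1}$ through two SMWF expansions, each of which presupposes the other --- has exactly the same defect; your proposal is no worse, but neither argument actually establishes part~1 as stated.
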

\begin{proof}
% Since $\widehat{R}_X$ is nonsingular, it follows from Sherman-Morrison-Woodbury formula (SMWF) that $(I+ \widehat{G}X)^{-1}$ exist. Thus,Consequently,
Applying the Sherman-Morrison-Woodbury formula (SMWF) with \eqref{hatG}, we see that two matrices
\begin{align*}
\Omega_X &:=(I+G\overline{\mathcal{R}(X)})^{-1}=(I+{{G}}\overline{H}+G\overline{A}^HX \Theta_X\overline{A})^{-1}\\
&= \overline{\Theta_H} - \overline{\Theta_H} G\overline{A}^HX (I+\widehat{G}X)^{-1} \overline{A} \overline{\Theta_H}\\
(I+\widehat{G}X)^{-1} &%=(I+\overline{G}X+\overline{A} (I+G\overline{H})^{-1}{G}\overline{A}^HX)^{-1}
=\Theta_X- \Theta_X \overline{A}\Omega_X G\overline{A}^HX\Theta_X.
\end{align*}
exist, where $\Theta_X  := (I+\overline{{G}}X)^{-1}$.
%Similarly, we also have
%%\begin{align*}
%$ (I+\widehat{G}X)^{-1} =(I+\overline{G}X+\overline{A} (I+G\overline{H})^{-1}{G}\overline{A}^HX)^{-1}=\Theta_X- \Theta_X \overline{A}\Omega_X G\overline{A}^HX\Theta_X.$
%\end{align*}
Therefore, we conclude that
\begin{align*}
  \widehat{T}_X^D &= (I+\widehat{G}X)^{-1}\widehat{A}=\overline{T_X}\Omega_X
  (I+G(\overline{\mathcal{R}(X)}-\overline{A}^HX\Theta_X\overline{A}))(I+G\overline{H})^{-1}A\\
&=\overline{T_X}\Omega_X
  (I+G\overline{H})(I+G\overline{H})^{-1}A=\overline{T_X}T_{\mathcal{R}(X)} = \widehat{T}_X^C.
\end{align*}
Especially, if $X\in \mathcal{R}_=$, then $X\in \widehat{\mathcal{R}}_=$ and thus $\widehat{T}_X^D = \widehat{T}_X$.
\end{proof}
%\subsection{stabilizing and almost stabilizing solutions}
%\begin{Definition}
%$X_s$ is called the (almost) stabilizing solution of CDARE~\eqref{cdare} if $X_s\in\mathcal{R}_=$ and $\rho(\widehat{T}_{X_s})< 1$ $(\rho(\widehat{T}_{X_s})\leq 1)$.
%\end{Definition}
%\begin{Theorem}
%\par\noindent
%\begin{enumerate}
%  \item The stabilizing $X_s$ is unique if it exist.
%  \item Assume that
%\begin{equation}\label{ma1}
%  H\in\mbox{dom}(\mathcal{R}),\,
% \mathbb{T}\neq\emptyset\quad \mbox{and}\quad \mathcal{R}_{\leq}\cap\mathbb{P}\neq\emptyset.
%\end{equation}
%If $R>0$ and $H\geq 0$, the following statements hold :%$\mathbb{T}\neq\emptyset$.
%\begin{enumerate}
%  \item[(i)]The pair $(\widehat{A},\widehat{B})$ is stabilizable.
%  %
%  \item[(ii)] The CDARE \eqref{cdare} has a unique and almost stabilizing solution $X_s\in \mathbb{H}_n$. Furthermore, $X_s=X_M\geq 0$.
%  %%
%%  \item[(iii)] The DARE \eqref{dare} has a almost stabilizing solution $X\in \mathbb{H}_n$.
%%
%%\item[(iv)] The DARE \eqref{dare} has a maximal and almost stabilizing solution $X\in \mathbb{H}_n$.
%\end{enumerate}
%\end{enumerate}
%\end{Theorem}
\section{An accelerated fixed-point iteration} \label{sec:afpi}

According to Theorem~\ref{thm:inheri-1}, it is natual to design numerical methods for computing the maximal solution $X_M$ of
the CDARE \eqref{cdare} via its tansformed DARE \eqref{dare2-b} under the assumptions in \eqref{ma} and $H\in \mathrm{dom}(\mathcal{R})$.
Although there are dozens of numerical methods presented in the existing literatures for solving standard DAREs,
we are mainly concerned with the possibility whether the FPI \eqref{fpi-hat} can be accelerated at least superlinearly for finding the maximal solution
of the CDARE \eqref{cdare} in this section.

Since the FPI is usually linearly convergent, a numerical method of higher order of convergence is always required in the practical computation
and many real-life applications. Recently, we have proposed an accelerated fixed-point iteration \cite{c.f21} for computing the extremal solutions of a standard DARE
\begin{equation} \label{dare2-Y}
Y = \mathcal{R}_d (Y) := \widehat{A}^H Y (I + \widehat{G} Y)^{-1}\widehat{A} + \widehat{H},
\end{equation}
ith $\widehat{G}\geq 0$ and $\widehat{H}\geq 0$. Therefore, we shall extend the idea of AFPI for solving
the transformed DARE \eqref{dare2-b} with $\widehat{G}, \widehat{H}\in \mathbb{H}_n$ and provide a corresponding convergence theorem in this section.

For any positive integer $r > 1$, we will revisit an accelerated fixed-point  iteration of the form
\begin{subequations} \label{afpi-Yk}
\begin{align}
  \widehat{Y}_{k+1} &= \mathcal{R}_d^{(r^{k+1} - r^{k})}(\widehat{Y}_k),\quad k\geq 1,\\
  \widehat{Y}_{1} &=\mathcal{R}_d^{(r)}(\widehat{Y}_0),\quad k=1
\end{align}
\end{subequations}
 with $\widehat{Y}_{0}=Y_0$, for computing the numerical solutions of DARE \eqref{dare2-Y}.
Here $\mathcal{R}_d^{(\ell)}(\cdot)$ denotes the composition of the operator $\mathcal{R}_d (\cdot)$ itself for $\ell$ times, where $\ell \geq 1$ is a positive integer.
Theoretically, the iteration of the form \eqref{afpi-Yk} is equivalent to the formula
\begin{align} \label{afpi-eqform}
    \widehat{Y}_{k} &= \mathcal{R}_d^{(r^k)}(\widehat{Y}_0),\quad k\geq 1,
\end{align}
with $\widehat{Y}_0 = Y_0$, and we see that $\widehat{Y}_k = Y_{r^{k}}$ for each $k\geq 1$.

For the FPI defined by $Y_{k+1} = \mathcal{R}_d (Y_k)$, in which $Y_0\in \mathbb{H}_n$ is an initial matrix and
the operator $\mathcal{R}_d (\cdot)$ is defined by \eqref{dare2-Y},
it is shown in \cite{l.c18} that the fixed-point iteration can be rewritten as the following formulation
 \begin{equation}\label{fpi-eqform}
 Y_{k+1} = \mathcal{R}_d^{(k)}(\mathcal{R}_d (Y_0))= \mathcal{R}_d^{(k+1)}(Y_0) = H_{k}+A_{k}^H Y_0 (I+ G_{k} Y_0)^{-1} A_{k},
 \end{equation}
where the sequence of matrices $\{ (A_k, G_k, H_k)\}_{k=0}^\infty$ is generated by the following iteration
\begin{equation} \label{opF}
\mathbb{X}_{k+1} = F(\mathbb{X}_k, \mathbb{X}_0) :=
\begin{bmatrix} A_0\Delta_{G_{k},{H_0}}A_{k}\\G_0+A_0\Delta_{G_{k},{H_0}}G_{k}A_0^H\\H_{k}+A_{k}^H H_0\Delta_{G_{k},{H_0}}A_{k}\end{bmatrix},
\end{equation}
with $\mathbb{X}_k := \left[ A_k^H\  G_k \ H_k \right]^H$ and $\mathbb{X}_0:= \left[ \widehat{A}^H\  \widehat{G}\ \widehat{H} \right]^H$ for each $k\geq 0$,
provided that the matrices $\Delta_{G_i, H_j} := (I+G_i H_j)^{-1}$ exists for all $i,j\geq 0$.
Notice that $F: \mathbb{K}_n \times \mathbb{K}_n \rightarrow \mathbb{K}_n$ is a binary operator with
$\mathbb{K}_n:= \mathbb{C}^{n\times n}\times \mathbb{H}_n \times \mathbb{H}_n$.
Moreover, it has been shown in Theorem 4.2 of \cite{l.c20} that the iteration \eqref{opF} has the semigroup property and thus satisfies
the so-called discrete flow property~\cite{l.c20}, that is,
\begin{align}\label{DF}
{\mathbb{X}}_{i+j+1}=F({\mathbb{X}}_{i},{\mathbb{X}}_{j})
\end{align}
for any nonnegative integers $i$ and $j$. Here the subscript of \eqref{DF} is an equivalent adjustment to the original formula presented in \cite[Theorem 3.2]{l.c20}.

Analogously, in order to characterize the construction of the operator $\mathcal{R}_d^{(r^k)}(\cdot )$ defined by \eqref{afpi-eqform} with $r > 1$,
we define the operator $\mathbf{F}_\ell: \mathbb{K}_n \rightarrow \mathbb{K}_n$ iteratively by
\begin{equation} \label{bfF}
   \mathbf{F}_{\ell+1} (\mathbb{X}) = F(\mathbb{X}, \mathbf{F}_\ell (\mathbb{X})),\quad \ell\geq 1,
\end{equation}
with $\mathbf{F}_1 (\mathbb{X}) = \mathbb{X}$ for all $\mathbb{X}\in \mathbb{K}_n$ and$F(\cdot, \cdot)$ being defined by \eqref{opF}.
Furthermore, if we let $\mathbf{X}_k := \left[\mathbf{A}_k^H\ \mathbf{G}_k\ \mathbf{H}_k \right]^H\in \mathbb{K}_n$ for $k\geq 0$ and
$\mathbf{X}_0 := \mathbb{X}_0$ be defined as in \eqref{opF}, then the sequence $\{\mathbf{A}_k, \mathbf{G}_k, \mathbf{H}_k \}$ generated by the iteration
\begin{equation} \label{afpi-r2}
   \mathbf{X}_{k+1} = F(\mathbf{X}_k, \mathbf{X}_k) = \mathbf{F}_2 (\mathbf{X}_k),\quad k\geq 0,
\end{equation}
which is equivalent to the doubling or strctured doubling algorithms \cite{a77,c.f.l.w04}.
Indeed, according the semigroup and discrete flow property~\eqref{DF},
we have $\mathbf{A}_k = A_{2^{k}-1}$, $\mathbf{G}_k = G_{2^{k}-1}$ and $\mathbf{H}_k = H_{2^{k}-1}$ for each $k\geq 0$.
That is, under the iteration \eqref{afpi-r2}, the sequence of matrices $\{(A_k, G_k, H_k)\}_{k=0}^\infty$ proceeds rapidly with their subscripts being the
exponential numbers of base number $r=2$.From the theoretical point of view, staring with a suitable matrix $Y_0\in \mathbb{H}_n$, the sequence $\{Y_k\}_{k=0}^\infty$ generated by \eqref{fpi-eqform} might also converge rapidly to its limit, if the limit exists.

Recently, for any positive integer $r>1$, Lin and Chiang proposed an efficient iterative method for generating
the sequence $\{(A_k,G_k,H_k)\}_{k=0}^\infty$ with order $r$ of R-convergence, provided that the operator $F(\cdot, \cdot)$ in \eqref{opF}
is well-defined, see, e.g., Algorithm 3.1 in \cite{l.c20}. Theoretically, this algorithm utilizes the following accelerated iteration
\begin{equation}\label{afpi-r}
    \mathbf{X}_{k+1} = \mathbf{F}_r (\mathbf{X}_k),\quad k\geq 0,
\end{equation}
with $\mathbf{X}_0:= \left[\widehat{A}^H\ \widehat{G}\ \widehat{H} \right]^H$ and $\mathbf{F}_r (\cdot)$ being the operator defined by \eqref{bfF},
for constructing $\mathbf{A}_k = A_{r^{k}-1}$, $\mathbf{G}_k = G_{r^{k}-1}$ and $\mathbf{H}_k = H_{r^{k}-1}$, respectively.
Therefore, combining \eqref{fpi-eqform} and \eqref{afpi-r}, we obtain the pseudocode of AFPI summarized in Algorithm~\ref{alg-afpi}.
\begin{algorithm}
 \caption{The Accelerated Fixed-Point Iteration with $r$ (AFPI($r$)) for solving the DARE \eqref{dare2-Y}.
%$$
%Y = \widehat{A}^H Y (I + \widehat{G} Y)^{-1} \widehat{A} + \widehat{H}.
%$$
}
\label{alg-afpi}
\begin{algorithmic}
\REQUIRE $\mathbf{A}_0=\widehat{A}\in \mathbb{C}^{n\times n}$, $\mathbf{G}_0 = \widehat{G}\in \mathbb{H}_n$, $\mathbf{H}_0=\widehat{H}\in \mathbb{H}_n$,
$\widehat{Y}_0\in \mathbb{H}_n$ and $r > 1$.
\ENSURE the maximal solution $\widehat{Y}_{k_1}$ to the DARE \eqref{dare2-Y}.

\FOR{$k =0,1,2,\ldots$}
  \STATE $A_k^{(1)} = \mathbf{A}_k$,  $G_k^{(1)} = \mathbf{G}_k$,  $H_k^{(1)} = \mathbf{H}_k$;
  \FOR{$l = 1,2,\ldots,  r-2$}
    \STATE $A_k^{(l+1)} = A_k^{(l)} (I + \mathbf{G}_k H_k^{(l)})^{-1}\mathbf{A}_k$;
    \STATE $G_k^{(l+1)} = G_k^{(l)} + A_k^{(l)} (I + \mathbf{G}_k H_k^{(l)})^{-1}\mathbf{G}_k (A_k^{(l)})^H$;
    \STATE $H_k^{(l+1)} = \mathbf{H}_k + \mathbf{A}_k^H H_k^{(l)}  (I + \mathbf{G}_k H_k^{(l)})^{-1}\mathbf{A}_k$;
  \ENDFOR
  \STATE $\mathbf{A}_{k+1} = A_k^{(r-1)} (I + \mathbf{G}_k H_k^{(r-1)})^{-1}\mathbf{A}_k$;
  \STATE $\mathbf{G}_{k+1} = G_k^{(r-1)} + A_k^{(r-1)} (I + \mathbf{G}_k H_k^{(r-1)})^{-1}\mathbf{G}_k (A_k^{(r-1)})^H$;
  \STATE $\mathbf{H}_{k+1} = \mathbf{H}_k + \mathbf{A}_k^H H_k^{(r-1)} (I + \mathbf{G}_k H_k^{(r-1)})^{-1}\mathbf{A}_k$;
  \STATE $\widehat{Y}_{k+1} = \mathbf{A}_{k+1}^H \widehat{Y}_0 (I + \mathbf{G}_{k+1} \widehat{Y}_0)^{-1}\mathbf{A}_{k+1} + \mathbf{H}_{k+1}$;
  \IF{$\widehat{Y}_{k_1}$ satisfies the stopping criterion for some positive integer $k_1$}
    \STATE \RETURN $\widehat{Y}_{k_1}$;
  \ENDIF
\ENDFOR
\end{algorithmic}
\end{algorithm}

From Theorem~\ref{thm:inheri-1}, note that $X_M$ is a Hermitian solution of the transformed DARE \eqref{dare2-b} under the assumptions in \eqref{ma} and
$H\in \mathrm{dom}(\mathcal{R})$, in which the coefficient maxtices $\widehat{A}$, $\widehat{G}$ and $\widehat{H}$ are given by \eqref{coeff-dare2}.
Consequently, the maximal solution $X_M$ of the CDARE \eqref{cdare} can be computed numerically
by Algorithm~\ref{alg-afpi} with $\mathbf{A}_0 = \widehat{A}$, $\mathbf{G}_0 = \widehat{G}$ and $\mathbf{H}_0 = \widehat{H}$, respectively, and
the superlinear convergence of the AFPI performed by Algorithm~\ref{alg-afpi} will be stated as follows.

\begin{Theorem} \label{thm:afpi-conv}
Assume that the hypotheses of Theorem~\ref{thm:inheri-1} hold and let $\widehat{A}$, $\widehat{G}$ and $\widehat{H}$ be defined by \eqref{coeff-dare2}.
If the sequence $\{\widehat{Y}_k\}_{k=0}^\infty$ is generated by Algorithm~\ref{alg-afpi} with $\mathbf{A}_0 = \widehat{A}$, $\mathbf{G}_0 = \widehat{G}$, $\mathbf{H}_0 = \widehat{H}$ and $\widehat{Y}_0 = X_0 \in \mathcal{S}_\geq$, then
$\{\widehat{Y}_k\}_{k=0}^\infty$ is a nonincreasing sequence that converges at least superlinearly to the maimal solution $X_M$ of the CDARE \eqref{cdare} with the rate of convergence
\begin{equation*}  %\label{roc-Xk}
 \limsup_{k\rightarrow \infty} \sqrt[r^k]{\|\widehat{Y}_k-X_M \|} \leq \rho (\widehat{T}_{X_M})^2,
\end{equation*}
provided that $\rho (\widehat{T}_{X_M})<1$.
\end{Theorem}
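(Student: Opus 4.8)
The plan is to reduce the convergence analysis of the AFPI entirely to the already-established convergence of the plain FPI \eqref{fpi} for the CDARE, by identifying the AFPI iterates as a rapidly indexed subsequence of the CDARE iterates. Concretely, under the hypotheses of Theorem~\ref{thm:inheri-1}, Theorem~\ref{thm:Xmax} guarantees that the sequence $\{X_k\}_{k=0}^\infty$ produced by \eqref{fpi} with $X_0\in\mathcal{S}_\geq$ is well-defined, nonincreasing, and converges to the common maximal solution $X_M$ with $\limsup_{m\to\infty}\sqrt[m]{\|X_m-X_M\|}\leq\rho(\widehat{T}_{X_M})$. The crux of the argument is the identity $\widehat{Y}_k=X_{2r^k}$ for every $k\geq 1$, after which the monotonicity, the convergence, and the stated rate all follow by elementary manipulations.

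First I would establish this identity. Since $\mathcal{R}_d$ in \eqref{dare2-Y} coincides with $\widehat{\mathcal{R}}=\mathcal{R}\circ\mathcal{R}$, the equivalent form \eqref{afpi-eqform} gives $\widehat{Y}_k=\mathcal{R}_d^{(r^k)}(\widehat{Y}_0)$. The algebraic backbone of Algorithm~\ref{alg-afpi} is the representation \eqref{fpi-eqform} together with the semigroup/discrete-flow property \eqref{DF}, which ensures that the triples generated by \eqref{afpi-r} satisfy $(\mathbf{A}_k,\mathbf{G}_k,\mathbf{H}_k)=(A_{r^k-1},G_{r^k-1},H_{r^k-1})$; substituting these into the final update of the algorithm and comparing with \eqref{fpi-eqform} yields $\widehat{Y}_k=\mathcal{R}_d^{(r^k)}(\widehat{Y}_0)=Y_{r^k}$, where $\{Y_j\}$ is the DARE iteration \eqref{fpi-hat} started at $Y_0=\widehat{Y}_0=X_0$. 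Finally, because $\widehat{\mathcal{R}}=\mathcal{R}\circ\mathcal{R}$, one has $Y_j=\mathcal{R}^{(2j)}(X_0)=X_{2j}$ (this is exactly the relation $Y_k=X_{2k}$ used in the proof of Theorem~\ref{thm:inheri-1}), whence $\widehat{Y}_k=X_{2r^k}$. I expect the well-definedness of this chain to be the main obstacle: one must check that every inverse $(I+\mathbf{G}_kH_k^{(l)})^{-1}$ appearing in Algorithm~\ref{alg-afpi} exists, which is guaranteed by the fact that all $X_k$ lie in $\mathrm{dom}(\mathcal{R})\subseteq\mathrm{dom}(\widehat{\mathcal{R}})$ and by the discrete-flow property, but it is the step requiring the most care.

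With the identity in hand, monotonicity and convergence are immediate: since $2r^k$ is strictly increasing in $k$ for $r>1$, the sequence $\{\widehat{Y}_k\}=\{X_{2r^k}\}$ is a subsequence of the nonincreasing sequence $\{X_k\}$, so $\widehat{Y}_{k+1}\leq\widehat{Y}_k$ for all $k$ and it inherits the limit $X_M$. For the rate I would write
\[
\sqrt[r^k]{\|\widehat{Y}_k-X_M\|}=\left(\|X_{2r^k}-X_M\|^{1/(2r^k)}\right)^2,
\]
and then restrict the bound $\limsup_{m\to\infty}\|X_m-X_M\|^{1/m}\leq\rho(\widehat{T}_{X_M})$ from Theorem~\ref{thm:Xmax} to the subsequence $m=2r^k$. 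Using the continuity and monotonicity of $t\mapsto t^2$ on $[0,\infty)$ to pass the square through the $\limsup$, this gives
\[
\limsup_{k\to\infty}\sqrt[r^k]{\|\widehat{Y}_k-X_M\|}\leq\rho(\widehat{T}_{X_M})^2,
\]
which is the claimed R-superlinear rate of order $r$. Here the assumption $\rho(\widehat{T}_{X_M})<1$ is precisely what makes the rate part of Theorem~\ref{thm:Xmax} applicable, since it forces $X_M\in\mathbb{T}$, and it simultaneously guarantees $\rho(\widehat{T}_{X_M})^2<1$.
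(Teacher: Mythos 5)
Your proposal is correct and follows essentially the same route as the paper: both identify $\widehat{Y}_k=\widehat{\mathcal{R}}^{(r^k)}(X_0)=X_{2r^k}$ via the discrete-flow property \eqref{DF}, recognize $\{\widehat{Y}_k\}$ as a subsequence of the nonincreasing FPI sequence $\{X_k\}$ from Theorem~\ref{thm:inheri-1}, and inherit the limit and the rate. The only notable difference is at the last step: you obtain the exponent $\rho(\widehat{T}_{X_M})^2$ directly by restricting the bound of Theorem~\ref{thm:Xmax} to the indices $m=2r^k$, whereas the paper defers to Theorem~\ref{thm:Xmax-dare} together with Theorem~\ref{thm:inheri-2}; your derivation is the cleaner one for producing the squared rate as stated.
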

\begin{proof}
From the AFPI with $r>1$ defined by \eqref{afpi-r}, the sequence $\{\mathbf{X}_{k}\}_{k=0}^\infty$ generated by Algorithm~\ref{alg-afpi} satisfies the discrete flow property \eqref{DF}. Then we see that $\mathbf{A}_{k}={A}_{r^{k}-1}$, $\mathbf{G}_{k}={G}_{r^{k}-1}$, $\mathbf{H}_{k}={H}_{r^{k}-1}$ and
$\widehat{Y}_k = \widehat{\mathcal{R}}^{(r^k)} (X_0)$ for each $k\geq 1$, see, e.g., Remark 4.1 of \cite{l.c18},
where the operator $\widehat{\mathcal{R}}(\cdot )$ is defined by \eqref{dare2-b}.

As mentioned in the proof of Theorem~\ref{thm:inheri-1}, since $\{\widehat{Y}_k\}_{k=0}^\infty$ is a subsequence of $\{X_k\}_{k=0}^\infty$ generated by
the FPI \eqref{fpi}, it nonincreasingly converges to the maximal solution $X_M$ of the CDARE \eqref{cdare} as $k$ increases.
The rest of proof is straightforward from Theorem~\ref{thm:Xmax-dare} and Theorem~\ref{thm:inheri-2}.
\end{proof}

\section{Numerical examples} \label{sec:NumExamp}

In this section we shall give two examples to illustrate the correctness of aforementioned theorems in this paper
and the feasibility of the FPI \eqref{fpi} and the AFPI presented in Section~\ref{sec:afpi}.
In our numerical experiments shown below, we will measure the normalized residual
\[  \mbox{NRes} (Z) : = \frac{\|Z - \mathcal{R}(Z) \|}
{\|Z\| + \|A^H \overline{Z} A\| + \|A^H \overline{Z} B R_Z^{-1} B^H \overline{Z} A\| + \|H\|}  \]
for each quantity $Z$ computed by the FPI \eqref{fpi} or Algorithm~\ref{alg-afpi}, where $\|\cdot \|$ denotes the matrix $2$-norm.
 In our numerical results, each iterative method was terminated when $\mbox{NRes} \leq 1.0\times 10^{-15}$.

All numerical experiments were performed on ASUS laptop (ROG GL502VS-0111E7700HQ), using Microsoft Win-
dows 10 operating system and MATLAB Version R2019b, with Intel Core i7-7700HQ
CPU and 32 GB RAM.

\begin{example} \label{ex1}   \em
We first consider a scalar CDARE \eqref{cdare} of the form
\begin{equation} \color{black}
  x  = |a|^2 \bar{x} -\frac{|a|^2 \bar{x}^2 |b|^2}{r_0 + |b|^2\bar{x}} + h %\nonumber \\
   = \frac{|a|^2 \bar{x}}{1+g\bar{x}} + h, \label{scdare}
\end{equation}
where $a,b\in \mathbb{C}$, $r_0,h\in \mathbb{R}$ with $r_0+|b|^2\bar{x} > 0$ and $g := |b|^2 / r_0$ with $r_0 \neq 0$. Without loss of generality, we assume that
$\color{black}  |a| > 0$, $r_0 >0$ and thus $g>0$.
From \eqref{scdare} and $\color{black} 1+g\bar{x} > 0$, we obtain
\[ g|x|^2 + x - (|a|^2 + gh)\bar{x} - h = 0,  \]
which has two solutions $\color{black} x_M, x_m \in \mathbb{H}_1 = \mathbb{R}$ satisfying
\begin{equation}  \label{xM}
x_M := \frac{-(1 - |a|^2 - gh) + \sqrt{D}}{2g}\quad \mbox{and}\quad  x_m := \frac{-(1 - |a|^2 - gh) - \sqrt{D}}{2g}
\end{equation}
if %$1+|a|^2+gh \neq 0$ and
the discriminant
$D := (1 - |a|^2 - gh)^2 + 4gh \geq 0$. Note that
$D\geq 0$ if and only if $h\geq h_M:=\frac{-(1-|a|)^2}{g}$ or $h\leq h_m:=\frac{-(1+|a||)^2}{g}$.

For any $h > h_M$, let the coefficient matrices of the CDARE \eqref{cdare-a} be defined by
  \begin{align*}
  &A = \bb a & 0 & \cdots &0\\ 0 & 0 & \cdots & 0 \\ \vdots & \vdots & \ddots & \vdots \\ 0 & 0 & \cdots & 0 \eb\in \mathbb{C}^{n\times n},
  B = \bb b\\ 0 \\ \vdots \\ 0  \eb \in \mathbb{C}^{n\times 1}, R = r_0,\\
  &H = \bb h & h_{12} & \cdots & h_{1n} \\ \overline{h_{12}} & h_{22} & \cdots & h_{2n} \\ \vdots & \vdots & \ddots & \vdots \\
  \overline{h_{1n}} & \overline{h_{2n}} & \cdots & h_{nn} \eb\in \mathbb{H}_n,
   \end{align*}
where the entries $a$, $b$, $r_0$ and $h_{ij}$ are randomly generated by MATLAB coomands {\tt rand}, {\tt randn}, {\tt crand} and {\tt crandn}, respectively.
It is easily seen that the CDARE \eqref{cdare-a} has two extremal solutions, namely,
\[ X_M := \bb x_M & h_{12} & \cdots & h_{1n} \\ \overline{h_{12}} & h_{22} & \cdots & h_{2n} \\ \vdots & \vdots & \ddots & \vdots \\
  \overline{h_{1n}} & \overline{h_{2n}} & \cdots & h_{nn} \eb\in \mathbb{H}_n,\quad
  X_m := \bb x_m & h_{12} & \cdots & h_{1n} \\ \overline{h_{12}} & h_{22} & \cdots & h_{2n} \\ \vdots & \vdots & \ddots & \vdots \\
  \overline{h_{1n}} & \overline{h_{2n}} & \cdots & h_{nn} \eb\in \mathbb{H}_n,  \]
 with $x_M$ and $x_m$ being defined by \eqref{xM}. Moreover, the maximal  solution $X_M$ of the CDARE \eqref{cdare-a} satisfies $\rho (\widehat{T}_{X_M}) = \frac{|a|^2}{(1+gx_M)^2} < 1$.

 In our numerical experiments we tested the CDAREs with $n=500$. Starting with an initial matrix $X_0\in \mathcal{S}_\geq \cap \mathbb{T}$, the FPI \eqref{fpi} produced a highly assurate approximation $X_9$ to the maximal solution $X_M$ after $9$ iterations. In this case, its relative error is about $1.43\times 10^{-16}$
 and $\rho (\widehat{T}_{X_9}) \approx 0.072848 < 1$. The numerical results are reported in Table~\ref{tab1-ex1}.

\begin{table}[tbhp]
\begin{center}
\begin{tabular}{c|c|c}
  \hline
  $k$ & NRes($X_k$) & $\rho (\widehat{T}_{X_k})$  \\
 \hline\hline
 $1$ & $1.65\times 10^{-7}$ & $7.28\times 10^{-2}$   \\
  $2$ & $1.20\times 10^{-8}$ & $7.28\times 10^{-2}$   \\
   $3$ & $8.74\times 10^{-10}$ & $7.28\times 10^{-2}$   \\
   $4$ & $6.36\times 10^{-11}$ & $7.28\times 10^{-2}$   \\
   $5$ & $4.64\times 10^{-12}$ & $7.28\times 10^{-2}$   \\
   $6$ & $3.38\times 10^{-13}$ & $7.28\times 10^{-2}$   \\
   $7$ & $2.47\times 10^{-14}$ & $7.28\times 10^{-2}$   \\
   $8$ & $1.84\times 10^{-15}$ & $7.28\times 10^{-2}$   \\
   $9$ & $1.22\times 10^{-16}$ & $7.28\times 10^{-2}$   \\
  \hline
 \end{tabular}
\end{center}
\caption{Numerical results of the FPI \eqref{fpi} for Example \ref{ex1} with $n=500$.}\label{tab1-ex1}
\end{table}

   On the other hand, in order to verify the correctness of Theorem~\ref{thm:inheri-1} numerically, we
   transformed the CDARE \eqref{cdare-a} to the associated DARE \eqref{dare2}
 of coefficient matrices defined by \eqref{coeff-dare2}. Based on these matrices, we computed an approximation $\widetilde{Y}_M$ to
 the maximal solution of the DARE \eqref{dare2} via NATLAB built-in command {\tt dare}. Note that its relative error and
 the difference between $X_9$ and $\widetilde{Y}_M$ are
 \[ \frac{\| \widetilde{Y}_M - X_M \|}{\|X_M\|}\approx 9.04\times 10^{-16},\quad \|X_9 - \widetilde{Y}_M \| \approx 9.00\times 10^{-14},   \]
which give a numerical evidence for the inheritance property presented in Theorem~\ref{thm:inheri-1}.
In addition, applying the AFPI(2) performed by Algorithm~\ref{alg-afpi} with $\mathbf{A}_0 = \widehat{A}$,
$\mathbf{G}_0 = \widehat{G}$, $\mathbf{H}_0 = \widehat{H}$ and $\widehat{Y}_0 = X_0$, it generated an extremely accurate approximation $\widehat{Y}_3$ to $X_M$ with absolute error being $0.00\times 10^0$ after 3 iterations, and the numerical results are shown in Table~\ref{tab2-ex1}.
Again, the validity of Theorem~\ref{thm:inheri-1} follows from this numerical experiment.
\begin{table}[tbhp]
\begin{center}
\begin{tabular}{c|c|c}
  \hline
  $k$ & NRes($\widehat{Y}_k$) & $\rho (\widehat{T}_{\widehat{Y}_k})$  \\
 \hline\hline
 $1$ & $6.36\times 10^{-11}$ & $7.28\times 10^{-2}$   \\
  $2$ & $1.84\times 10^{-15}$ & $7.28\times 10^{-2}$   \\
   $3$ & $6.12\times 10^{-17}$ & $7.28\times 10^{-2}$   \\
   \hline
 \end{tabular}
\end{center}
\caption{Numerical results of AFPI(2) for Example \ref{ex1} with $n=500$.}\label{tab2-ex1}
\end{table}
\end{example}

\begin{example} \label{ex2} \em
  In this example we shall consider a critical case of the CDARE \eqref{cdare-a} with coefficient matrices
  defined as in Example~\ref{ex1}. For $h=h_M$, it follows from \eqref{xM} that the discriminant $D = 0$ and
  hence $x_M = x_m = \frac{|a|-1}{g}$. In this case, the spectrum of the matrix $\widehat{T}_{X_M}$ associated with the maximal solution $X_M$ is
  $\sigma (\widehat{T}_{X_M}) = \{1, 0\}$, where $\lambda = 1$ is a simple eigenvalue of $\widehat{T}_{X_M}$.
%Thus, it follows from Proposition~\ref{pop:lyap} that the corresponding closed-loop antilinear system \eqref{cls}
%is Lyapunov stable and $X_M$ is an almost stabilizing solution of the CDARE.

Although the upper bound of the convergence rate presented in Theorem~\ref{thm:Xmax} is not applicable here,
starting with a suitable matrix $X_0\in \mathcal{S}_\geq \cap \mathbb{T}$, the FPI \eqref{fpi} still converges to the maximal solution $X_M$ of the
CDARE theoretically, but the order of convergence might be linear or even sublinear.
With this reason, we apply the AFPI($r$) presented in Algorithm~\ref{alg-afpi} for computing the maximal solution of the CDARE \eqref{cdare} with
$n  = 500$ and different values of $r$. After the given CDARE is transformed into the DARE \eqref{dare2},
starting from $\mathbf{A}_0 = \widehat{A}$, $\mathbf{G}_0 = \widehat{G}$, $\mathbf{H}_0 = \widehat{H}$ and $\widehat{Y}_0 = X_0$,
the convergence histories of AFPI($r$) are shown in Figure~\ref{fig-ex2} for $r=2$, $5$, $9$ and $100$, respectively.
\begin{figure}[tbhp]
\centering
\includegraphics[height=10cm,width=12cm]{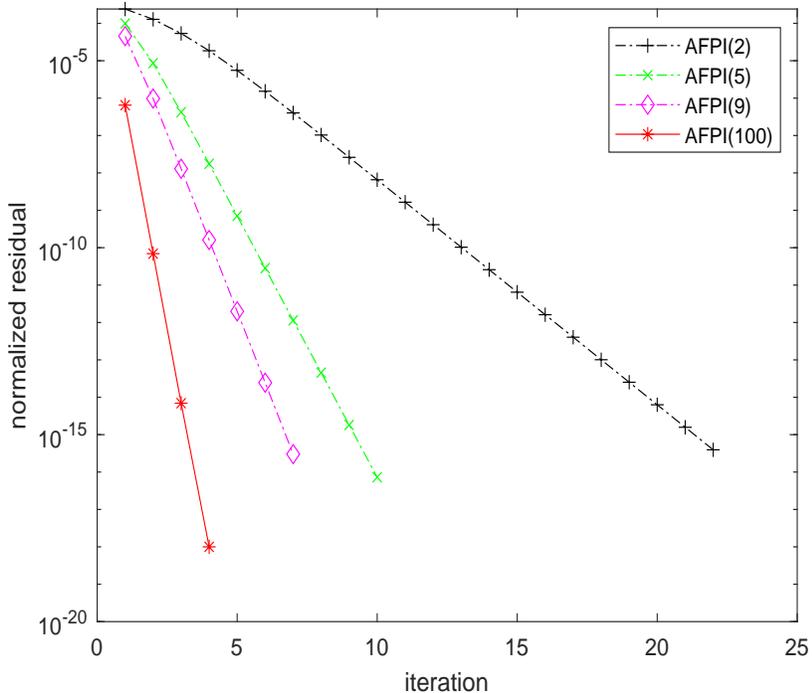}
\caption{Convergence histories of the AFPI for Example~\ref{ex2} with $n=500$.}
\label{fig-ex2}
\end{figure}
In particular, AFPI(100) produced an approximation $\widehat{Y}_4$ of $10$ significant digits
after $4$ iterations, since its relative error is about $4.89\times 10^{-10}$.
The numerical results of AFPI(100) are reported in Table~\ref{tab1-ex2}.
\begin{table}[tbhp]
\begin{center}
\begin{tabular}{c|c|c}
  \hline
  $k$ & NRes($\widehat{Y}_k$) & $\rho (\widehat{T}_{\widehat{Y}_k})$  \\
 \hline\hline
 $1$ & $6.49\times 10^{-7}$ & $9.90\times 10^{-1}$   \\
  $2$ & $6.93\times 10^{-11}$ & $1.00\times 10^{0}$   \\
   $3$ & $6.93\times 10^{-15}$ & $1.00\times 10^{0}$   \\
   $4$ & $9.99\times 10^{-19}$ & $1.00\times 10^{0}$   \\
   \hline
 \end{tabular}
\end{center}
\caption{Numerical results of AFPI(100) for Example \ref{ex2} with $n=500$.}\label{tab1-ex2}
\end{table}

Note that the rate of convergence of the AFPI (or FPI) presented in Theorem~\ref{thm:afpi-conv}
(or Theorem~\ref{thm:Xmax}) is no longer applicable for this example  because $\rho (\widehat{T}_{X_M}) = 1$.
We will investigate these interesting issues in our future work.
\end{example}

\section{Concluding remarks} \label{sec:cr}

In this paper we mainly deal with a class of conjugate discrete-time Riccati equations,
arising originally from the LQR control problem for discrete-time antilinear systems. In this case, the design of the optimal controller usually depends on the existence of a unique positive semidefinite optimizing solution of CDAREs \eqref{cdare-a} with $R>0$ and $H\geq 0$, if the antilinear system is assumed to be controllable.
Recently, under the assumptions in \eqref{ma}, it is shown in \cite{f.c23} that the existence of the maximal Hermitian solution can be extended to the CDARE \eqref{cdare-a} with $R$ being nonsingular and $H\in \mathbb{H}_n$, respectively, which is summarized in Theorem~\ref{thm:Xmax}.

From Theorem~\ref{thm:Xmax}, Theorem~\ref{thm:Xmax-dare} and Lemma~\ref{lem:inheri}, we have deduced that
the CDARE \eqref{cdare} and its transformed DARE \eqref{dare2} both have the same
maximal Hermitian solution $X_M$ in Theorem~\ref{thm:inheri-1}. As a special case mentioned in Theorem~\ref{thm:inheri-2},
their corresponding closed-loop matrices related to $X_M$ coincide as well.
To our best knowledge, these inheritance properties of the CDARE \eqref{cdare} have not been shown in the existing literatures.
%As a byproduct of Theorem~\ref{thm:Xmax}, we have solved the state feedback stabilization problems for discre-time antilinear systems in Section~\ref{sec:sfsp},
%and describe a special case of the almost stabilizing solution to the CDARE \eqref{cdare}., leading to the Lyapunov stability of the corresponding closed-loop system,
%in Proposition~\ref{pop:lyap}.

From the computational point of view, inspired by Theorem~\ref{thm:inheri-1}, the AFPI presented in Algorithm~\ref{alg-afpi} is proposed for computing $X_M$ via the transformed DARE \eqref{dare2} directly. Numerical results in Section~\ref{sec:NumExamp} show the correctness of Theorem~\ref{thm:inheri-1} and the feasibility of the AFPI for finding the maximal solution of CDAREs numerically. In particular, although the convergence rate of the AFPI presented in Theorem~\ref{thm:afpi-conv} is no longer applicable for Example~\ref{ex2}, it seems that our AFPI algorithm still performed well when the CDARE \eqref{cdare} has
a maximal and almost stabilizing solution,and we shall further investigate this critical case in our futute work.

\section*{Acknowledgment}

This research work is partially supported by the National Science and Technology Council of Taiwan and
the National Center for Theoretical Sciences of Taiwan. The first author (Chun-Yueh Chiang) would like to thank the support from the National Science and Technology Council of Taiwan under the grant MOST 111-2115-M-150-001-MY2, and the corresponding author (Hung-Yuan Fan) would like to
thank the support from the National Science and Technology Council of Taiwan under the grant MOST 111-2115-M-003-012.

%\bibliographystyle{abbrv}
%%\bibliographystyle{siam}
%\bibliography{cdare}

\begin{thebibliography}{10}

\bibitem{a77}
B.~D.~O. Anderson.
\newblock Second-order convergent algorithms for the steady-state {{Riccati}}
  equation.
\newblock In {\em 1977 {{IEEE Conf}}. {{Decis}}. {{Control}} 16th {{Symp}}.
  {{Adapt}}. {{Process}}. {{Spec}}. {{Symp}}. {{Fuzzy Set Theory Appl}}.},
  pages 948--953, {New Orleans, LA, USA}, Dec. 1977. {IEEE}.

\bibitem{c.f21}
C.-Y. Chiang and H.-Y. Fan.
\newblock An efficient iteration for the extremal solutions of discrete-time
  algebraic {{Riccati}} equations.
\newblock {\em arXiv}, (arXiv:2111.08923), Nov. 2021.

\bibitem{c.f.l.w04}
E.~K.-W. Chu, H.-Y. Fan, W.-W. Lin, and C.-S. Wang.
\newblock Structure-preserving algorithms for periodic discrete-time algebraic
  {{Riccati}} equations.
\newblock {\em Int. J. Control}, 77(8):767--788, May 2004.

\bibitem{f.c23}
H.-Y. Fan and C.-Y. Chiang.
\newblock On the maximal solution of the conjugate discrete-time algebraic
  {{Riccati}} equation.
\newblock {\em Appl. Math. Lett.}, 135:108438, Jan. 2023.

\bibitem{g.k.l92}
T.~Gudmundsson, C.~Kenney, and A.~Laub.
\newblock Scaling of the discrete-time algebraic {{Riccati}} equation to
  enhance stability of the {{Schur}} solution method.
\newblock {\em IEEE Trans. Automat. Contr.}, 37(4):513--518, Apr. 1992.

\bibitem{k88}
M.~Kimura.
\newblock Convergence of the doubling algorithm for the discrete-time algebraic
  {{Riccati}} equation.
\newblock {\em Int. J. Syst. Sci.}, 19(5):701--711, Jan. 1988.

\bibitem{l.r95}
P.~Lancaster and L.~Rodman.
\newblock {\em Algebraic {{Riccati Equations}}}.
\newblock Oxford Science Publications. {Clarendon Press ; Oxford University
  Press}, {Oxford : New York}, 1995.

\bibitem{l79}
A.~Laub.
\newblock A {{Schur}} method for solving algebraic {{Riccati}} equations.
\newblock {\em IEEE Trans. Automat. Contr.}, 24(6):913--921, Dec. 1979.

\bibitem{l.c18}
M.~M. Lin and C.-Y. Chiang.
\newblock An accelerated technique for solving one type of discrete-time
  algebraic {{Riccati}} equations.
\newblock {\em J. Comput. Appl. Math.}, 338:91--110, Aug. 2018.

\bibitem{l.c20}
M.~M. Lin and C.-Y. Chiang.
\newblock On the semigroup property for some structured iterations.
\newblock {\em J. Comput. Appl. Math.}, 374:112768, Aug. 2020.

\bibitem{l.w.g20}
H.~Liu, T.~Wang, and D.~Guo.
\newblock Design and validation of zeroing neural network to solve time-varying
  algebraic {{Riccati}} equation.
\newblock {\em IEEE Access}, 8:211315--211323, 2020.

\bibitem{m19}
S.~Miyajima.
\newblock Verified computation for the {{Hermitian}} positive definite solution
  of the conjugate discrete-time algebraic {{Riccati}} equation.
\newblock {\em J. Comput. Appl. Math.}, 350:80--86, Apr. 2019.

\bibitem{p.l.s80}
T.~Pappas, A.~Laub, and N.~Sandell.
\newblock On the numerical solution of the discrete-time algebraic {{Riccati}}
  equation.
\newblock {\em IEEE Trans. Automat. Contr.}, 25(4):631--641, Aug. 1980.

\bibitem{r.l.a20}
S.~Riaz, H.~Lin, and M.~P. Akhter.
\newblock Design and implementation of an accelerated error convergence
  criterion for norm pptimal iterative learning controller.
\newblock {\em Electronics}, 9(11):1766, Oct. 2020.

\bibitem{r.l.e21}
S.~Riaz, H.~Lin, and H.~Elahi.
\newblock A novel fast error convergence approach for an optimal iterative
  learning controller.
\newblock {\em Integrated Ferroelectrics}, 213(1):103--115, Jan. 2021.

\bibitem{v81}
P.~Van~Dooren.
\newblock A generalized eigenvalue approach for solving {{Riccati}} equations.
\newblock {\em SIAM J. Sci. and Stat. Comput.}, 2(2):121--135, June 1981.

\bibitem{w.q.l.s16}
A.-G. Wu, Y.-Y. Qian, W.~Liu, and V.~Sreeram.
\newblock Linear quadratic regulation for discrete-time antilinear systems:
  {{An}} anti-{{Riccati}} matrix equation approach.
\newblock {\em J. Franklin Inst.}, 353(5):1041--1060, Mar. 2016.

\bibitem{w.z17}
A.-G. Wu and Y.~Zhang.
\newblock {\em Complex {{Conjugate Matrix Equations}} for {{Systems}} and
  {{Control}}}.
\newblock Communications and {{Control Engineering}}. {Springer Singapore},
  {Singapore}, 2017.

\end{thebibliography}

\end{document}